\documentclass[11pt,reqno]{elsarticle}
\usepackage{fullpage}
\usepackage{dsfont}
\usepackage{amsmath}
\usepackage{amsfonts}
\usepackage{amssymb}
\usepackage{graphicx}
\usepackage{mathrsfs}
\usepackage{epsfig}
\usepackage{amsthm}
\usepackage{verbatim}
\usepackage{graphicx}
\usepackage{color}
\usepackage{url}
\usepackage{todonotes}

\usepackage[colorlinks,citecolor=black,linkcolor=black,
            bookmarksopen,
            bookmarksnumbered
           ]{hyperref}

\newcommand{\beq}{\begin{equation}}
\newcommand{\eeq}{\end{equation}}

\newcommand{\nab}{\langle \nabla \rangle_c}

\newtheorem{theorem}{Theorem}[section]

\newtheorem{lemma}[theorem]{Lemma}
\newtheorem{cor}[theorem]{Corollary}

\theoremstyle{definition}
\newtheorem{definition}{Definition}[section]

\allowdisplaybreaks[3]

\begin{document}
\begin{abstract}
In this paper we present a novel class of asymptotic consistent exponential-type integrators for Klein--Gordon--Schr\"odinger systems that capture all regimes from the slowly varying classical regime up to the highly oscillatory non-relativistic limit regime. We achieve convergence of order one and two that is uniform in $c$ without any time step size restrictions. {In particular, we establish an explicit relation between gain in negative powers of the potentially large parameter $c$ in the error constant and loss in derivative.}
\end{abstract}
\begin{keyword}
Klein--Gordon--Schr\"odinger, highly oscillatory integral, asymptotic consistency
\end{keyword}

\title{Uniformly accurate integrators for Klein--Gordon--Schr\"odinger systems from the classical to non-relativistic limit regime}

\author{María {Cabrera Calvo}}
\address{Laboratoire Jacques-Louis-Lions, Sorbonne Universit\'e, France}
\ead{maria.cabrera\_calvo@sorbonne-universite.fr}

\maketitle
\section{Introduction}
We consider the Klein--Gordon--Schr\"odinger system
\begin{equation}
\begin{aligned}
    \label{KGS}
    &c^{-2}\partial_{tt}z(t,x) - \Delta z(t,x) + c^2z(t,x) = \vert \psi(t,x) \vert^2, \\
    &i \partial_t \psi(t,x) + \frac12\Delta\psi(t,x) + \psi(t,x) z(t,x) = 0,
\end{aligned}
\end{equation}
given by a Klein--Gordon equation coupled nonlinearly with a classical Schr\"odinger equation. This setting arises in quantum field theory, representing the dynamics of the interaction between a complex-valued scalar nucleon field $\psi:\mathbb{R}\times\mathbb{R}^d\to\mathbb{C}$ with a neutral real-valued scalar meson field $z:\mathbb{R}\times\mathbb{R}^d\to\mathbb{R}$. For existence and uniqueness of global smooth solutions see \cite{FuTs1}, \cite{FuTs2}, \cite{FuTs3}.

The parameter $c$, proportional to the speed of light, plays a very important role in the behaviour of the solution and gives rise to two different regimes. We distinguish between the so-called \textit{relativistic regime}, where $c=1$, and the \textit{non-relativistic regime} with $c\gg 1$. The former regime is well studied numerically, for instance see \cite{BY}, as its solution is slowly varying. The non-relativistic, on the other hand, brings in a significant additional challenge in terms of its numerical treatment, given the highly oscillatory nature of its solution. It causes classical numerical methods to collapse, as they fail to capture the rapid oscillations, leading to large errors or, in turn, severe time step size restrictions and thus very intense computational efforts. This is even the case for Gautschy-type methods (see \cite{BD}, \cite{HLW}), which were specifically designed to numerically solve highly oscillatory problems. Splitting methods fail to deal with these rapid oscillations in a similar way, see for instance \cite{Fa} for their analysis in the context of Schr\"odinger equations.

In \cite{BZ}, an unconditionally stable method was developed, based on a multi-scale expansion technique, that achieves uniform linear convergence in time, for sufficiently smooth solutions. Quadratic convergence was achieved in this setting, yet only in the case where either $c=\mathcal{O}(1)$ or $c\tau\geq 1$.

In \cite{BaKoS18} an approach was presented that succeeded to capture all regimes in $c$, providing error bounds that were independent of this parameter and without requiring any step size restrictions. This was done by means of the introduction of the so-called \textit{twisted variables}, that were already well known both in physics as "interaction picture", and in the study of partial differential equations at low regularity. The main idea therein was to explicitly filter out the highly oscillatory phases, approximate the slowly varying parts, which does not produce dependency on $c$ in the error constants, and integrate the interacting highly oscillatory phases exactly. In addition, this approach achieves asymptotic consistency, meaning that it preserves the NLS limit on the discrete level.

In comparison to \cite{BaKoS18}, we propose a novel class of exponential-type integrators that equally manages to capture all regimes of $c$, without any time step size restrictions, and is asymptotically consistent. {We introduce a new construction, exploiting the structure of the leading differential operators $\frac12\Delta$ and $c\nab$, which allows us to establish an explicit relation between a gain of negative powers of the potentially very large parameter $c$ in the error constant versus a loss of derivative. In other words, a gain in accuracy, in the non-relativistic regime, in exchange of a loss in derivative. In addition to this, in the first order scheme, we require one derivative less in the Klein--Gordon part than pre-existing methods up to our knowledge.} We achieve this by employing techniques introduced in \cite{CS} in the context of the Klein--Gordon equation. This recent work couples the ideas of low regularity (in space) approximation presented in \cite{RS} in the context of an abstract class of of evolution equations, with the idea of uniform accuracy achieved in \cite{BaKoS18}.

The underlying strategy is the expansion of suitable filtered functions that allows the embedding of the full spectrum of oscillations into the numerical scheme. Here, the commutator structure of the leading operator $\frac12\Delta$ and
is studied (see Lemma \ref{lemma_comm}), as it plays a crucial role in the achievement of low regularity approximations that do not produce powers of $c$ in our error estimates.  {In addition to this, we study the asymptotic behaviour of the leading operator $c\nab$ in order to resolve the nonlinear frequency interaction caused by the coupled nature of this system.}

In the present setting, however, the fact that the equations are coupled non-linearly supposes an additional challenge. It makes the analysis much more involved, since one has to consider the non-linear interaction of highly oscillatory parts. This rises the need of new, adapted techniques.\\
\\
\noindent{\bf Outline of the paper.}
We begin by expressing \eqref{KGS} as a first order system in time in Section 2. In Section 3 we motivate the new first order scheme and it's second order counterpart will be derived in Section 4. We prove their uniform convergence in Theorems \ref{thm:1} and \ref{thm:2}, respectively. In Section 5 we briefly present the limit system, show that, as $c\to\infty$ formally, we recover the solution to the limit system. Finally, numerical experiments are presented in Section 6, confirming our theoretical results.\\
\\
\noindent{\bf Notation.}
For reasons regarding ease of implementation and clarity of presentation, we impose periodic boundary conditions, i.e. $x\in\mathbb{T}^d$. However, we note that nor the construction nor the analysis of our scheme depends on any Fourier expansion techniques, and thus can be generalised to bounded domains $x\in \Omega\subset \mathbb{R}^d$ equipped with suitable boundary conditions, as well as the full space $x\in\mathbb{R}^d$. For simplicity, we may occasionally make use of $\mathcal{O}$-notation exclusively in the context of constants independent of $c$. For the sake of simplifying future notation, we define the $\varphi_1$-function as
\begin{align}\label{phi1}
    \varphi_1(\xi) = \frac{e^{\xi}-1}{\xi},\quad \xi \in \mathbb{C}.
\end{align}
We also note 
\begin{align}\label{stability_trick}
    \varphi_1(\xi) = 1+\mathcal{O}(\xi).
\end{align}
We refer to \cite{HoOs} for details on this family of functions. In the following we fix $r>\frac{d}{2}$ and we denote by $\| . \|_r$ the standard $H^r=H^r(\mathbb{T}^d)$ Sobolev norm, where, for this choice of $r$, the following well-known bilinear estimate holds
$$
\| fg\|_r \leq C_{r,d} \|f\|_r\|g\|_r,
$$
for some constant $C_{r,d}>0$ independent of $f$ and $g$.

\section{Formulation as a first order system}
We start by defining the following differential operator that will simplify our notation significantly. For a given $c>0$ we define the following operator
\begin{align*}
    c\nab = c\sqrt{c^2-\Delta}.
\end{align*} 
One can verify that this differential operator is well-defined, as its corresponding Fourier multiplier has the form $(\nab)_k=\sqrt{k^2+c^2}$. We may now rewrite \eqref{KGS} as a first order system in time (see \cite{MaNa}). Setting
\begin{align}\label{def_u}
    u = z - ic^{-1}\nab^{-1}\partial_t z,\quad v = z - ic^{-1}\nab^{-1}\partial_t \overline{z},
\end{align}
a simple calculation shows that $z=\frac12(u+v)$. Furthermore, if we assume that $z(t,x)\in\mathbb{R}$, we have 
\begin{align}\label{z_ito_u}
    z=\frac12 (u+\overline{u}).
\end{align}
In the following we will restrict our attention to this case purely for the purpose of presenting our main ideas as clearly as possible. Note, however, that this does not impose any significant restriction. Having said this, a short calculation shows that the corresponding first order system in $(u,\psi)$ reads
\begin{align}
\label{KG}
    &i \partial_t u + c \nab u - c \nab^{-1} \vert \psi \vert ^2 =0, &&u(0)=z(0) -ic^{-1}\nab^{-1}\partial_tz(0),\\
\label{Schr}
    &i \partial_t \psi + \frac12\Delta\psi + \frac12\psi (u+\overline{u}) = 0,&&\psi(0)=\psi_0.
\end{align}

\section{A first order integrator}
In this section we proceed to give a detailed derivation of the numerical scheme for $u^{n+1}\approx u(t_{n+1})$ with $t_{n+1}=t_n+\tau$, followed by a less detailed derivation of the numerical scheme for $\psi^{n+1}\approx\psi(t_{n+1})$ that employs analogous ideas. Duhamel's formula for \eqref{KG} reads
$$
u(t_n+\tau) = e^{i \tau c\nab}u(t_n) - i c \nab^{-1} e^{i \tau c\nab}\int_0^{\tau} e^{-i s c\nab} \vert \psi(t_n+s) \vert^2 ds.
$$
Iterating Duhamel's formula for \eqref{Schr} leads to
\begin{align}
\label{1o_1}
u(t_n+\tau) = e^{i \tau c\nab}u(t_n) - i c \nab^{-1} e^{i \tau c\nab}\int_0^{\tau} e^{-i s c\nab} \vert e^{i\frac12 s\Delta}\psi(t_n) \vert^2 ds + \mathcal{R}_1, 
\end{align}
where $\mathcal{R}_1$ fulfills a bound of the form
\begin{equation}\label{R_1}
    \begin{aligned}
    \|\mathcal{R}_1\|_r &\leq \bigg\| i c \nab^{-1} e^{i \tau c\nab}\int_0^{\tau} e^{-i s c\nab}\big( e^{-i\frac12s\Delta}\overline{\psi(t_n)} \big)  \mathcal{I}_{\psi}(s) \,ds \bigg\|_r \\
    &+ \bigg\| i c \nab^{-1} e^{i \tau c\nab}\int_0^{\tau} e^{-i s c\nab}\big( e^{i\frac12s\Delta}{\psi(t_n)} \big)    \mathcal{I}_{\psi}(s)  \,ds \bigg\|_r \\
    &+ \bigg\| i c \nab^{-1} e^{i \tau c\nab}\int_0^{\tau} e^{-i s c\nab}\big| \mathcal{I}_{\psi}(s)\big|^2 ds \bigg\|_r\\
    &\leq \tau^2 K\big( \sup_{0\leq\xi\leq\tau} \|\psi(t_n+\xi)\|_r\big),
    \end{aligned}
\end{equation}
where
\begin{align}\label{I_psi}
\mathcal{I}_{\psi}(s)=\frac{i}{2} e^{i\frac12 s\Delta}\int_0^{s}e^{-i\frac12\sigma\Delta}\psi(t_n+\sigma)\big( u(t_n+\sigma) + \overline{u(t_n+\sigma)} \big)d\sigma,
\end{align}
thanks to the following remarks
\begin{align}
    \label{cnab_liniso_bound}
    \|c\nab^{-1}\|_r \leq 1, \quad \|e^{itc\nab}\|_r =1,\quad \|e^{it\Delta}\|_r =1 \,\quad\forall t\in\mathbb{R}.
\end{align}

It is left to approximate the present oscillatory integral in a suitable way and, to this end, we introduce the following crucial commutator term, which will appear in our local error estimates. 
\begin{definition}
For a function $H(v_1,\dots,v_n)$, $n\geq 1$, and a linear operator $L:H^r\to H^r$ we define the following commutator type term
\begin{align*}
    \mathcal{C}[H,L] (v_1,\dots,v_n)= -L(H(v_1,\dots,v_n))+\sum_{i=1}^n D_iH(v_1,\dots,v_n)\cdot Lv_{i},
\end{align*}
where $D_iH$ stands for the partial derivative of H with respect to the variable $v_i$. Furthermore, we set
$$\mathcal{C}^2[H,L] (v_1,\dots,v_n)=  \mathcal{C}[ \mathcal{C}[H,L],L] (v_1,\dots,v_n)$$
and
$$
f_{\text{quad}}(v,w)=vw.
$$
\end{definition}
We now establish the necessary bounds for these commutator type terms.
\begin{lemma}
\label{lemma_comm}
We have that
\begin{align*}
    &\|\mathcal{C}[f_{\text{quad}}(\cdot,\cdot),\Delta] (v,w)\|_r \leq K_1 \|v\|_{r+1}\|w\|_{r+1},\\
    &\|\mathcal{C}^2[f_{\text{quad}}(\cdot,\cdot),\Delta] (v,w)\|_r \leq K_2 \|v\|_{r+2}\|w\|_{r+2}
\end{align*}
for some $K_1,K_2>0$.
\end{lemma}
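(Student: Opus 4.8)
The plan is to compute the commutator $\mathcal{C}[f_{\text{quad}},\Delta](v,w)$ explicitly using the Leibniz rule for the Laplacian and then estimate the result with the bilinear estimate in $H^r$. By definition, $\mathcal{C}[f_{\text{quad}},\Delta](v,w) = -\Delta(vw) + (\Delta v)\,w + v\,(\Delta w)$, and the classical product rule gives $\Delta(vw) = (\Delta v)w + 2\nabla v\cdot\nabla w + v(\Delta w)$, so the second-order terms cancel and we are left with
\begin{align*}
    \mathcal{C}[f_{\text{quad}},\Delta](v,w) = -2\,\nabla v\cdot\nabla w.
\end{align*}
This is the key structural observation: the commutator with $\Delta$ reduces the total number of derivatives hitting the product from two (naively) to one on each factor. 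Applying the bilinear estimate $\|fg\|_r \le C_{r,d}\|f\|_r\|g\|_r$ componentwise to $\nabla v\cdot\nabla w = \sum_{j=1}^d (\partial_j v)(\partial_j w)$ yields
\begin{align*}
    \|\mathcal{C}[f_{\text{quad}},\Delta](v,w)\|_r \le 2\sum_{j=1}^d \|\partial_j v\|_r \|\partial_j w\|_r \le 2 C_{r,d}\, d\, \|v\|_{r+1}\|w\|_{r+1},
\end{align*}
which gives the first bound with $K_1 = 2 C_{r,d} d$ (using $\|\partial_j v\|_r \le \|v\|_{r+1}$).

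For the second estimate I would iterate. Since $\mathcal{C}^2[f_{\text{quad}},\Delta] = \mathcal{C}[\mathcal{C}[f_{\text{quad}},\Delta],\Delta]$ and $\mathcal{C}[f_{\text{quad}},\Delta](v,w) = -2\nabla v\cdot\nabla w$ is itself a sum of bilinear quadratic forms in $(v,w)$, I would apply the definition of $\mathcal{C}[\cdot,\Delta]$ to each summand $H_j(v,w) = (\partial_j v)(\partial_j w)$. Because $D_1 H_j(v,w)\cdot h = (\partial_j h)(\partial_j w)$ and similarly for $D_2$, the same Leibniz cancellation applies to each $H_j$, giving $\mathcal{C}[H_j,\Delta](v,w) = -2\,\nabla(\partial_j v)\cdot\nabla(\partial_j w)$, and hence
\begin{align*}
    \mathcal{C}^2[f_{\text{quad}},\Delta](v,w) = 4\sum_{j=1}^d\sum_{k=1}^d (\partial_k\partial_j v)(\partial_k\partial_j w).
\end{align*}
Again each term contains exactly two derivatives on $v$ and two on $w$; the bilinear estimate then gives $\|\mathcal{C}^2[f_{\text{quad}},\Delta](v,w)\|_r \le 4 C_{r,d} d^2 \|v\|_{r+2}\|w\|_{r+2}$, i.e. $K_2 = 4 C_{r,d} d^2$.

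The only point requiring a little care — and the step I would regard as the crux — is confirming that the Leibniz-rule cancellation of the top-order terms genuinely occurs at each level of iteration, i.e. that $\mathcal{C}[\cdot,\Delta]$ applied to a product-type functional always lowers the naive derivative count by one per factor rather than leaving a second-order term behind. This is transparent for a plain product $vw$, but since the statement concerns $\mathcal{C}^2$ one must check the intermediate functional $-2\nabla v\cdot\nabla w$, which is a finite sum of products of first derivatives; the argument goes through termwise because differentiation commutes with itself and $\mathcal{C}[\cdot,\Delta]$ acts linearly in its first argument. Everything else is a direct application of the bilinear estimate together with $\|\partial^\alpha v\|_r \le \|v\|_{r+|\alpha|}$, so no further regularity or smoothing input is needed.
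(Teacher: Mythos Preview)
Your proof is correct and follows essentially the same approach as the paper: compute the commutator via the Leibniz rule $\Delta(vw)=v\Delta w+2\nabla v\cdot\nabla w+w\Delta v$, observe the cancellation of top-order terms, and conclude with the bilinear estimate; the paper simply says the second bound follows ``iterating this argument,'' which is exactly what you carry out explicitly. One small slip: in your first displayed chain of inequalities the constant $C_{r,d}$ should already appear after the first $\le$ (from the bilinear estimate applied to each product $(\partial_j v)(\partial_j w)$), not only in the final expression.
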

\begin{proof}
We will show the first assertion in detail, the second assertion can be proven iterating this argument. By definition,
\begin{align*}
    \mathcal{C}[f_{\text{quad}}(\cdot,\cdot),\Delta] (v,w)= -\Delta (vw) + w\Delta v + v\Delta w.
\end{align*}
The assertion follows by the product rule of the Laplacian
$$
\Delta(vw) = v\Delta w + 2\nabla v \cdot\nabla w + w\Delta v.
$$
\end{proof}
We now aim to capture the oscillatory integral in \eqref{1o_1} in a way that does not trigger dependency on $c$ in the error terms and allows a low regularity approximation. The technique is captured in the following lemma.
\begin{lemma}[First order approximation of the integral in \eqref{1o_1}]
\label{Lemma1ou}
It holds that
\begin{align*}
    \int_0^{\tau} e^{-i s c\nab} \vert e^{i \frac12 s\Delta}v \vert^2 ds = \tau \overline{v}\varphi_1(i \tau (\Delta-c^2))v + \mathcal{O}\big(\tau^2(\mathcal{C}[f_{\text{quad}}(\cdot,\cdot),\Delta](v,v)+c^{-2\alpha}\Delta^{1+\alpha}v)\big).
\end{align*}
\end{lemma}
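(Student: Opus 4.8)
The strategy is to separate the error into a \emph{non-relativistic} part, which carries the negative powers of $c$ at the cost of a loss of derivative, and a \emph{low-regularity} part, which produces the commutator term $\mathcal C[f_{\text{quad}},\Delta]$. First I would write $|e^{i\frac{s}{2}\Delta}v|^2=(e^{-i\frac{s}{2}\Delta}\overline v)(e^{i\frac{s}{2}\Delta}v)$ and, using the functional calculus for the self-adjoint operator $-\Delta$, decompose
\[
c\nab = c^2 - \tfrac12\Delta + R_c, \qquad R_c := c\nab - c^2 + \tfrac12\Delta ,
\]
so that $R_c$ has Fourier multiplier $c\sqrt{c^2+|k|^2}-c^2-\tfrac12|k|^2=-\tfrac{c^2}{2}\big(\sqrt{1+|k|^2/c^2}-1\big)^2$. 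The elementary inequality $(\sqrt{1+x}-1)^2\le C_\alpha\,x^{1+\alpha}$ (interpolating the endpoint bounds $(\sqrt{1+x}-1)^2\le x$ and $(\sqrt{1+x}-1)^2\le x^2/4$, so that in fact $C_\alpha=4^{-\alpha}$), valid for every $x\ge0$ and every $\alpha\in[0,1]$, then yields the crucial bound $\|R_c w\|_r\le C_\alpha\,c^{-2\alpha}\|\Delta^{1+\alpha}w\|_r$, which is exactly the ``gain of $c^{-2\alpha}$ against a loss of $2(1+\alpha)$ derivatives'' trade-off announced in the introduction. Since all operators in sight are functions of $\Delta$, hence commute, we have $e^{-isc\nab}=e^{-isc^2}\,e^{i\frac{s}{2}\Delta}\,e^{-isR_c}$; replacing $e^{-isR_c}$ by the identity therefore costs, by $|e^{i\theta}-1|\le|\theta|$ at the multiplier level, a product estimate in $H^{r+2(1+\alpha)}$ (licit since $r>d/2$), and the isometry bounds \eqref{cnab_liniso_bound}, an $H^r$ error that is $\mathcal O\big(s\,c^{-2\alpha}\Delta^{1+\alpha}v\big)$, with $\mathcal O$-constant depending on $\|v\|_r$.

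It then remains to approximate $e^{i\frac{s}{2}\Delta}\big[(e^{-i\frac{s}{2}\Delta}\overline v)(e^{i\frac{s}{2}\Delta}v)\big]$ by $\overline v\,e^{is\Delta}v$. The key observation is that, setting $w:=e^{i\frac{s}{2}\Delta}v$ and using $\overline{e^{it\Delta}v}=e^{-it\Delta}\overline v$ ($\Delta$ being a real operator), the first quantity equals $e^{i\frac{s}{2}\Delta}(\overline w\,w)$ whereas the second equals $(e^{i\frac{s}{2}\Delta}\overline w)(e^{i\frac{s}{2}\Delta}w)$; their difference is thus exactly the homomorphism defect $\Phi(t):=e^{it\Delta}(\overline w\,w)-(e^{it\Delta}\overline w)(e^{it\Delta}w)$ evaluated at $t=\tfrac{s}{2}$, with $w$ held fixed. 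A short computation with the Leibniz rule $\Delta(ab)=a\Delta b+2\nabla a\cdot\nabla b+b\Delta a$ shows that $\Phi(0)=0$ and
\[
\partial_t\Phi = i\Delta\Phi + 2i\,\nabla(e^{it\Delta}\overline w)\cdot\nabla(e^{it\Delta}w) = i\Delta\Phi - i\,\mathcal C[f_{\text{quad}},\Delta]\big(e^{it\Delta}\overline w,\,e^{it\Delta}w\big),
\]
whence Duhamel's formula gives
\[
\Phi\big(\tfrac{s}{2}\big) = -i\int_0^{s/2} e^{i(\frac{s}{2}-\sigma)\Delta}\,\mathcal C[f_{\text{quad}},\Delta]\big(e^{i\sigma\Delta}\overline w,\,e^{i\sigma\Delta}w\big)\,d\sigma .
\]
By \eqref{cnab_liniso_bound}, Lemma~\ref{lemma_comm}, and $\|w\|_{r+1}=\|v\|_{r+1}$, this is bounded in $H^r$ by $\tfrac{s}{2}K_1\|v\|_{r+1}^2$, i.e.\ it is $\mathcal O\big(s\,\mathcal C[f_{\text{quad}},\Delta](v,v)\big)$.

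Finally I would reassemble: the surviving main term is $\int_0^\tau e^{-isc^2}\big(\overline v\,e^{is\Delta}v\big)\,ds=\overline v\big(\int_0^\tau e^{is(\Delta-c^2)}\,ds\big)v=\tau\,\overline v\,\varphi_1\big(i\tau(\Delta-c^2)\big)v$ by the definition \eqref{phi1} of $\varphi_1$, and integrating over $s\in[0,\tau]$ the two error contributions above (each $\mathcal O(s)$ times the corresponding structured quantity) produces the claimed remainder $\mathcal O\big(\tau^2(\mathcal C[f_{\text{quad}},\Delta](v,v)+c^{-2\alpha}\Delta^{1+\alpha}v)\big)$, its $H^r$ norm being controlled by $\|v\|_{r+1}^2$ and $c^{-2\alpha}\|v\|_r\|\Delta^{1+\alpha}v\|_r$ through Lemma~\ref{lemma_comm} and a product estimate. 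I expect the main obstacle to be the first step: extracting the non-relativistic discrepancy as an honest negative power of $c$ while keeping the loss of derivative explicit and \emph{uniform in} $\alpha\in[0,1]$, which rests on the sharp multiplier identity for $R_c$ together with the interpolation inequality $(\sqrt{1+x}-1)^2\le C_\alpha x^{1+\alpha}$. The commutator step, by contrast, is the low-regularity mechanism of \cite{RS,CS} adapted to the quadratic interaction $|\psi|^2$, and is essentially routine once the product is recast through $w=e^{i\frac{s}{2}\Delta}v$.
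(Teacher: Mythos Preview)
Your proof is correct and follows essentially the same approach as the paper: the decomposition $c\nab=c^2-\tfrac12\Delta+R_c$ with the fractional multiplier bound is exactly the paper's expansion \eqref{taylor_cnab}, and your ``homomorphism defect'' $\Phi(t)=e^{it\Delta}(\overline w w)-(e^{it\Delta}\overline w)(e^{it\Delta}w)$ together with its Duhamel representation is precisely the integral Taylor remainder of the paper's filtered function $\mathcal N(s,s_1,\Delta,v)$ in the auxiliary variable $s_1$ (indeed $\Phi(s/2)=\mathcal N(s,s,\Delta,v)-\mathcal N(s,0,\Delta,v)$, and your source term $-i\,\mathcal C[f_{\text{quad}},\Delta]$ is exactly $\partial_{s_1}\mathcal N$). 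The only cosmetic difference is that you make the multiplier identity $R_c\leftrightarrow-\tfrac{c^2}{2}(\sqrt{1+|k|^2/c^2}-1)^2$ and the interpolation $(\sqrt{1+x}-1)^2\le 4^{-\alpha}x^{1+\alpha}$ fully explicit, whereas the paper invokes a ``fractional Taylor series expansion'' for \eqref{taylor_cnab}.
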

\begin{proof}
We introduce the following filtered function defined by
\begin{align}
\label{N}
    \mathcal{N}(s,s_1,\Delta,v) :=e^{i \frac12 s_1 \Delta}  \big( e^{-i\frac12 s_1\Delta}e^{i s\Delta}v \big)\big( e^{-i\frac12 s_1\Delta}\overline{v} \big).
\end{align}
Then, the integral reads 
$$
\int_0^{\tau} e^{-i s c\nab} \vert e^{i \frac12 s\Delta}v \vert^2 ds = \int_0^{\tau} e^{-is c\nab} e^{-i\frac12 s\Delta} \mathcal{N}(s,s,\Delta,v) ds.
$$
{At this point one needs to first tackle the interactions between the differential operators $c\nab$ and $\frac12\Delta$. Note that, as it can be shown via fractional Taylor series expansion of the function $x\to c^2\sqrt{c^2+x^2}$, it holds 
\begin{align}\label{taylor_cnab}
    c\nab  = c^2- \tfrac{1}{2}\Delta+ \mathcal{O}\big(\tfrac{\Delta^{1+\alpha}}{c^{2\alpha}}\big), \quad 0\leq\alpha\leq1,
\end{align}
and thus,
$$
\int_0^{\tau} e^{-is c\nab} e^{-i\frac12 s\Delta} \mathcal{N}(s,s,\Delta,v) ds = \int_0^{\tau} e^{-is c^2} \mathcal{N}(s,s,\Delta,v) ds + \mathcal{R}_1,
$$
where, using \eqref{taylor_cnab}, we see that we may bound $\mathcal{R}_1$ by
\begin{align}\label{c_trick}
    \|\mathcal{R}_1\|_r \leq c^{-2\alpha}K(\|v\|_{r+2(1+\alpha)}),\quad 0\leq \alpha \leq 1,
\end{align}
for some $K'>0$ independent of $c$.}

On the other hand, Taylor series expansion gives the following approximation
$$\mathcal{N}(s,s_1,\Delta,v)=\mathcal{N}(s,0,\Delta,v)+\int_0^{s} \partial_{s_1}\mathcal{N}(s,s_1,\Delta,v) ds_1.$$

Thus, plugging in this expansion and then integrating exactly we obtain
\begin{align*}
\int_0^{\tau} e^{-i s c\nab} e^{i\frac12 s\Delta} \mathcal{N}(s,s,\Delta,v) ds & = \int_0^{\tau} e^{-i s c^2} \mathcal{N}(s,0,\Delta,v) ds+ \mathcal{R}_1\\
& = \int_0^{\tau} e^{-i s c^2} \big( e^{i s\Delta}{v} \big)\overline{v}  ds+ \mathcal{R}_1+ \mathcal{R}_2\\
&= \tau \overline{v}\varphi_1(i \tau (\Delta-c^2))v    + \mathcal{R}_1+ \mathcal{R}_2,
\end{align*}
where $\mathcal{R}_2$ can be bounded as follows. By  \eqref{cnab_liniso_bound} and the remark
\begin{align*}
    \partial_{s_1} \mathcal{N}(s,s_1,\Delta,v)_{\vert s_1=s}  
    = e^{i\frac12 s_1\Delta}\mathcal{C}[f_{\text{quad}}(\cdot,\cdot),-i\tfrac{1}{2}\Delta](e^{-i\frac12 s_1\Delta}e^{is\Delta}v , e^{-i\frac12s_1\Delta}\overline{v}).
\end{align*}
it holds
\begin{align*}
    \|\mathcal{R}_2\|_r \leq \bigg\| \int_0^{\tau} e^{-i s c\nab} e^{i\frac12s\Delta} \bigg(\int_0^s\partial_{s_1} \mathcal{N}(s,s_1,\Delta,v)ds_1\bigg) ds  \bigg\|_r\leq \tau^2 K\big(\mathcal{C}[f_{\text{quad}}(\cdot,\cdot),i\tfrac{1}{2}\Delta](v , \overline{v})\big).
\end{align*}
\end{proof}

The expansion \eqref{1o_1} together with Lemma \ref{Lemma1ou} lead to the following first order uniformly accurate integrator
\begin{align}
\label{KGo1}
    u^{n+1} = e^{i \tau c\nab}u^n - i \tau c \nab^{-1} e^{i \tau c\nab}\overline{\psi^n}\varphi_1(i \tau (\Delta-c^2))\psi^n .
\end{align}

Finally, given $u^{n+1}$ by \eqref{KGo1}, one can very easily find $z^{n+1}$, via \eqref{z_ito_u}, 
$$
z^{n+1}=\frac12\big( u^{n+1} +\overline{u^{n+1}} \big).
$$

Now we proceed as follows for $\psi^{n+1}$. Duhamel's formula for \eqref{Schr} reads
$$
\psi(t_n+\tau) = e^{i\frac12\tau\Delta}\psi(t_n) + i\frac12 e^{i\frac12\tau\Delta}\int_0^{\tau}e^{-i\frac12s\Delta}\psi(t_n+s)\big( u(t_n+s) + \overline{u(t_n+s)} \big)ds.
$$
Iterating Duhamel's formula for \eqref{KG} and \eqref{Schr} respectively leads to
\begin{align}
    \label{1o_3}
    \psi(t_n+\tau) = e^{i\frac12\tau\Delta}\psi(t_n) + \frac{i}{2} e^{i\frac12\tau\Delta}\int_0^{\tau}e^{-i\frac12s\Delta}\big(e^{i \frac12s\Delta}\psi(t_n)\big)\big( e^{i sc\nab}u(t_n) + e^{-i sc\nab}\overline{u(t_n)} \big)ds+ \mathcal{R}_2,
\end{align}
where $\mathcal{R}_2$ fulfills the bound
\begin{align*}
    \| \mathcal{R}_2 \|_r \leq \|\mathcal{R}_{2,1}\|_r + \|\mathcal{R}_{2,2}\|_r, 
\end{align*}
with 
\begin{align*}
    \mathcal{R}_{2,1}=\frac{i}{2} e^{i\frac12\tau\Delta}\int_0^{\tau}e^{-i\frac12s\Delta}
   \mathcal{I}_{\psi}(s)
    \big( e^{i sc\nab}u(t_n) + e^{-i sc\nab}\overline{u(t_n)} \big)ds,
\end{align*}
with $\mathcal{I}_{\psi}$ given by \eqref{I_psi} and
\begin{align*}
    \mathcal{R}_{2,2}=\frac{i}{2} e^{i\frac12\tau\Delta}\int_0^{\tau}e^{-i\frac12s\Delta}\big(e^{i \frac12s\Delta}\psi(t_n)\big)\big( \mathcal{I}_u(s)+ \overline{\mathcal{I}_u(s)} \big)ds,
\end{align*}
\begin{align}\label{I_u}
\mathcal{I}_u(s)=i c \nab^{-1} e^{i s c\nab}\int_0^{s} e^{-i \sigma c\nab} \vert \psi(t_n+\sigma) \vert^2 d\sigma.
\end{align}
Thus, we conclude by \eqref{cnab_liniso_bound} that
\begin{align}
    \label{R_2}
    \|\mathcal{R}_2\|_r \leq \tau^2 K\big(\sup_{t_n\leq t\leq t_{n+1}} \|u(t)\|_r,\sup_{t_n\leq t\leq t_{n+1}} \|\psi(t)\|_r\big).
\end{align}

It is left to approximate the highly oscillatory integral in \eqref{1o_3} and to this end we proceed analogously as in Lemma \ref{Lemma1ou}.
\begin{lemma}[First order approximation of the integral in \eqref{1o_3}]
\label{Lemma1opsi}
It holds that
\begin{align*}
    \int_0^{\tau}e^{-i\frac12s\Delta}&\big(e^{i \frac12s\Delta}v\big)\big( e^{i sc\nab}w + e^{-i sc\nab}\overline{w} \big)ds \\
    &=\tau v\big( \varphi_1(i\tau(c\nab-\tfrac{1}{2}\Delta))w + \varphi_1(-i\tau(c\nab+\tfrac{1}{2}\Delta))\overline{w} \big) + \mathcal{O}(\tau^2 \mathcal{C}[f_{\text{quad}}(\cdot,\cdot),i\Delta](v, w)).
\end{align*}
\end{lemma}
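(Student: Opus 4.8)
The plan is to follow the same strategy as in the proof of Lemma~\ref{Lemma1ou}, with one crucial simplification: here the oscillatory operator multiplying the whole integrand is $e^{-i\frac12 s\Delta}$ rather than $e^{-isc\nab}$, so no fractional Taylor expansion \eqref{taylor_cnab} of $c\nab$ is needed and, correspondingly, no $c^{-2\alpha}$-type remainder will appear. First I would split the integral linearly into a $w$-part and a $\overline w$-part,
\begin{align*}
\int_0^\tau e^{-i\frac12 s\Delta}\big(e^{i\frac12 s\Delta}v\big)e^{isc\nab}w\,ds \;+\; \int_0^\tau e^{-i\frac12 s\Delta}\big(e^{i\frac12 s\Delta}v\big)e^{-isc\nab}\overline w\,ds,
\end{align*}
and treat the two summands by the same device.

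For the first summand I would introduce the filtered function
\begin{align*}
\mathcal M_1(s,s_1,\Delta,v,w):= e^{-i\frac12 s_1\Delta}\Big[\big(e^{i\frac12 s_1\Delta}v\big)\big(e^{i\frac12 s_1\Delta}e^{is(c\nab-\frac12\Delta)}w\big)\Big],
\end{align*}
designed so that both endpoints are the right objects: since $e^{i\frac12 s_1\Delta}$ and $e^{is(c\nab-\frac12\Delta)}$ commute (both are functions of $\Delta$), at $s_1=s$ the two inner exponentials combine to $e^{isc\nab}$ and $\mathcal M_1(s,s,\Delta,v,w)$ equals exactly the integrand, while $\mathcal M_1(s,0,\Delta,v,w)=v\,e^{is(c\nab-\frac12\Delta)}w$. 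The key point is that the slowly varying part $-\tfrac12\Delta$ coming from the prefactor has been absorbed into the \emph{unitary} inner exponential $e^{is(c\nab-\frac12\Delta)}$; this is precisely what later produces the shifted symbol $c\nab-\tfrac12\Delta$ in the $\varphi_1$-term and, simultaneously, keeps the loss of regularity down to one derivative. For the $\overline w$-part I would use $\mathcal M_2$ defined identically with $e^{is(c\nab-\frac12\Delta)}w$ replaced by $e^{-is(c\nab+\frac12\Delta)}\overline w$, so that $\mathcal M_2(s,s,\Delta,v,w)$ reproduces the $\overline w$-integrand and $\mathcal M_2(s,0,\Delta,v,w)=v\,e^{-is(c\nab+\frac12\Delta)}\overline w$.

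Then I would write $\mathcal M_j(s,s,\Delta,v,w)=\mathcal M_j(s,0,\Delta,v,w)+\int_0^s\partial_{s_1}\mathcal M_j(s,s_1,\Delta,v,w)\,ds_1$, integrate over $s\in[0,\tau]$, and note that the leading terms integrate exactly: $v$ comes out of the $s$-integral and $\int_0^\tau e^{is(c\nab-\frac12\Delta)}\,ds=\tau\varphi_1\big(i\tau(c\nab-\tfrac12\Delta)\big)$, respectively $\int_0^\tau e^{-is(c\nab+\frac12\Delta)}\,ds=\tau\varphi_1\big(-i\tau(c\nab+\tfrac12\Delta)\big)$, giving exactly the two claimed main terms. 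For the remainder, differentiating in $s_1$ and using that $\Delta$ commutes with $e^{-i\frac12 s_1\Delta}$ I would obtain, exactly as in Lemma~\ref{Lemma1ou}, the commutator identity
\begin{align*}
\partial_{s_1}\mathcal M_1(s,s_1,\Delta,v,w)= e^{-i\frac12 s_1\Delta}\,\mathcal C\big[f_{\text{quad}}(\cdot,\cdot),\,i\tfrac12\Delta\big]\big(e^{i\frac12 s_1\Delta}v,\; e^{i\frac12 s_1\Delta}e^{is(c\nab-\frac12\Delta)}w\big),
\end{align*}
and the analogue for $\mathcal M_2$. Since $e^{i\frac12 s_1\Delta}$, $e^{is(c\nab-\frac12\Delta)}$ and $e^{-is(c\nab+\frac12\Delta)}$ are all unitary on every $H^r$ (their multiplier symbols are real), they contribute no powers of $c$; hence taking $\|\cdot\|_r$, using $\mathcal C[f_{\text{quad}},i\tfrac12\Delta]=\tfrac i2\,\mathcal C[f_{\text{quad}},\Delta]$ together with the bilinear bound of Lemma~\ref{lemma_comm} (which costs exactly one derivative on $v$ and one on $w$), and performing the double integration $\int_0^\tau\!\int_0^s$ yields a remainder of size $\mathcal O\big(\tau^2\,\mathcal C[f_{\text{quad}}(\cdot,\cdot),i\Delta](v,w)\big)$, as claimed.

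The step I expect to be the main obstacle is the correct design of $\mathcal M_1,\mathcal M_2$: one has to split the frequency content of $e^{\pm isc\nab}$ and of $e^{-i\frac12 s\Delta}$ so that (i) evaluating at $s_1=s$ faithfully reconstructs the original integrand, (ii) evaluating at $s_1=0$ leaves a term integrating exactly into the prescribed $\varphi_1$-expressions, and (iii) only unitary, hence $c$-uniform, operators survive inside, which is what both prevents powers of $c$ from entering the error and forces the appearance of the shifted symbols $c\nab\mp\tfrac12\Delta$. Once these filtered functions are in place, the Taylor expansion in $s_1$, the Leibniz computation of $\partial_{s_1}\mathcal M_j$, and the final integration and bounding are all routine.
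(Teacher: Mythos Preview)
Your proposal is correct and follows essentially the same approach as the paper: the paper defines a single filtered function $\mathcal{N}(s,s_1,\Delta,v,w)= e^{-i\frac12s_1\Delta}\big(e^{i\frac12 s_1\Delta}v\big)\big(e^{i\frac12 s_1\Delta}e^{-i \frac12s\Delta} (e^{i sc\nab}w + e^{-i sc\nab}\overline{w})\big)$, which is exactly your $\mathcal{M}_1+\mathcal{M}_2$ written together (note $e^{i\frac12 s_1\Delta}e^{-i\frac12 s\Delta}e^{isc\nab}=e^{i\frac12 s_1\Delta}e^{is(c\nab-\frac12\Delta)}$), and then carries out the same first-order Taylor expansion in $s_1$ and the same commutator identification for the remainder. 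The only difference is the cosmetic splitting into two summands, so the argument and the resulting bounds coincide.
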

\begin{proof}
We define the following filtered functions
$$
\mathcal{N}(s,s_1,\Delta,v,w)= e^{-i\frac12s_1\Delta}\big(e^{i\frac12 s_1\Delta}v)(e^{i\frac12 s_1\Delta}e^{-i \frac12s\Delta} (e^{i sc\nab}w + e^{-i sc\nab}\overline{w})\big).
$$
Taylor series expansion around the point $s_1=0$
yields the following first order approximation
\begin{align*}
    \int_0^{\tau}e^{-i\frac12s\Delta}\big(e^{i \frac12s\Delta}v\big)&\big( e^{i sc\nab}w + e^{-i sc\nab}\overline{w} \big)ds\\
    &=  \int_0^{\tau}\mathcal{N}(s,s,\Delta,v,w)ds =\int_0^{\tau}\mathcal{N}(s,0,\Delta,v,w)ds+ \mathcal{R}
    \\&={\tau v\big( \varphi_1(i\tau(c\nab-\tfrac{1}{2}\Delta))w + \varphi_1(-i\tau(c\nab+\tfrac{1}{2}\Delta))\overline{w} \big)} + \mathcal{R}.
\end{align*}
Once again we find a bound for $\mathcal{R}$ thanks to the observation
$$
\partial_{s_1} \mathcal{N}(s,s_1,\Delta,v,w)=e^{-i\frac12s_1\Delta}\mathcal{C}[f_{\text{quad}}(\cdot,\cdot),i\tfrac{1}{2}\Delta](e^{i\frac12s_1\Delta}v, e^{i\frac12s_1\Delta}(e^{is_1c\nab}w+e^{-is_1c\nab}\overline{w}  ) ),
$$
thus
$$
\|\mathcal{R}\|_r\leq \tau^2 K\big( \mathcal{C}[f_{\text{quad}}(\cdot,\cdot),i\tfrac{1}{2}\Delta](v , w)\big).
$$
\end{proof}

Plugging our findings from Lemmata \ref{Lemma1ou} and \ref{Lemma1opsi} into the expansion \eqref{1o_3} motivates the following scheme for $\psi^{n+1}$.
\begin{align}
    \label{Schro1}
    \psi^{n+1} = e^{i \frac12\tau\Delta}\psi^{n} + \tau \frac{i}{2} e^{i\frac12\tau\Delta}\psi^{n}\big( \varphi_1(i\tau(c\nab-\tfrac{1}{2}\Delta))u^{n} + \varphi_1(-i\tau(c\nab+\tfrac{1}{2}\Delta))\overline{u^{n}} \big).
\end{align}
In the sections that follow we aim to carry out the error analysis of the scheme in $(u^{n},\psi^{n})$ given by \eqref{KGo1} and \eqref{Schro1}. Before we begin we denote by $\varphi^t_{K}$, $\varphi^t_{S}$ the exact flows of \eqref{KG} and \eqref{Schr} respectively and by $\Phi^t_{K}$, $\Phi^t_{S}$ the numerical flows corresponding to \eqref{KGo1} and \eqref{Schro1} respectively, such that in particular it holds
$$
u(t_{n+1})=\varphi^{\tau}_K(u(t_n),\psi(t_n)),\, \psi(t_{n+1})=\varphi^{\tau}_S(u(t_n),\psi(t_n)),\quad u^{n+1}=\Phi^{\tau}_{K}(u^n,\psi^n),\, \psi^{n+1}=\Phi^{\tau}_{S}(u^n,\psi^n).
$$
\subsection{Local error analysis}
\begin{lemma}
\label{localerror1}
Fix $r>\frac{d}{2}$. The local error given by the differences $\varphi^{\tau}_K(u(t_n),\psi(t_n))-\Phi^{\tau}_K(u(t_n),\psi(t_n))$ and $\varphi^{\tau}_S(u(t_n),\psi(t_n))-\Phi^{\tau}_S(u(t_n),\psi(t_n))$ satisfies
$$
\varphi^{\tau}_K(u(t_n),\psi(t_n))-\Phi^{\tau}_K(u(t_n),\psi(t_n)) = \mathcal{O}(\tau^2(\mathcal{C}[f_{\text{quad}}(\cdot,\cdot),i\tfrac{1}{2}\Delta]+c^{-2\alpha}\Delta^{1+\alpha}))(\psi(t_n) ,\overline{\psi(t_n)}))
$$
and
$$
\varphi^{\tau}_S(u(t_n),\psi(t_n))-\Phi^{\tau}_S(u(t_n),\psi(t_n)) = \mathcal{O}(\tau^2 \mathcal{C}[f_{\text{quad}}(\cdot,\cdot),i\tfrac{1}{2}\Delta](\psi(t_n) , w)),
$$
for $w\in \{ u(t_n),\overline{u(t_n)} \}$.
\end{lemma}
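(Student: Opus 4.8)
The plan is to assemble the local error from the two approximation steps that were already carried out. For the Klein--Gordon component, I would start from the iterated Duhamel expansion \eqref{1o_1}, which gives
$$
\varphi^\tau_K(u(t_n),\psi(t_n)) = e^{i\tau c\nab}u(t_n) - ic\nab^{-1}e^{i\tau c\nab}\int_0^\tau e^{-isc\nab}\big|e^{i\frac12 s\Delta}\psi(t_n)\big|^2\,ds + \mathcal{R}_1,
$$
with $\|\mathcal{R}_1\|_r \le \tau^2 K(\sup_{0\le\xi\le\tau}\|\psi(t_n+\xi)\|_r)$ by \eqref{R_1}. Then I apply Lemma \ref{Lemma1ou} with $v=\psi(t_n)$ to replace the oscillatory integral by its closed form $\tau\overline{\psi(t_n)}\varphi_1(i\tau(\Delta-c^2))\psi(t_n)$ at the cost of a remainder of size $\mathcal{O}(\tau^2(\mathcal{C}[f_{\text{quad}},i\tfrac12\Delta](\psi(t_n),\overline{\psi(t_n)}) + c^{-2\alpha}\Delta^{1+\alpha}\psi(t_n)))$. (Here one should note that the $\tfrac12$ factors and the sign convention between the statement of Lemma \ref{Lemma1ou} and the definition of $\mathcal{C}[\cdot,\cdot]$ used in the lemma match up; the commutator $\mathcal{C}[f_{\text{quad}},\Delta]$ and $\mathcal{C}[f_{\text{quad}},i\tfrac12\Delta]$ differ only by a harmless constant since $\mathcal{C}$ is linear in its operator argument.) The leading term that survives is exactly $e^{i\tau c\nab}u(t_n) - i\tau c\nab^{-1}e^{i\tau c\nab}\overline{\psi(t_n)}\varphi_1(i\tau(\Delta-c^2))\psi(t_n) = \Phi^\tau_K(u(t_n),\psi(t_n))$ by \eqref{KGo1}, so subtracting gives the claimed bound, using $\|c\nab^{-1}\|_r\le1$ and the isometry bounds \eqref{cnab_liniso_bound} to pass the outer operators harmlessly through the norm.

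For the Schr\"odinger component the argument is structurally identical: start from the iterated Duhamel expansion \eqref{1o_3}, which already isolates the remainder $\mathcal{R}_2$ with $\|\mathcal{R}_2\|_r\le\tau^2 K(\dots)$ via \eqref{R_2}, then apply Lemma \ref{Lemma1opsi} with $v=\psi(t_n)$ and $w=u(t_n)$ to turn the oscillatory integral into $\tau\psi(t_n)(\varphi_1(i\tau(c\nab-\tfrac12\Delta))u(t_n)+\varphi_1(-i\tau(c\nab+\tfrac12\Delta))\overline{u(t_n)})$ modulo $\mathcal{O}(\tau^2\mathcal{C}[f_{\text{quad}},i\Delta](\psi(t_n),u(t_n)))$. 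The surviving leading term is precisely $\Phi^\tau_S(u(t_n),\psi(t_n))$ from \eqref{Schro1} (up to the prefactor $\tfrac{i}{2}e^{i\frac12\tau\Delta}$, which is an $H^r$-isometry composed with multiplication, absorbed into the constant), so subtraction yields the stated error of the form $\tau^2\mathcal{C}[f_{\text{quad}},i\tfrac12\Delta](\psi(t_n),w)$ with $w\in\{u(t_n),\overline{u(t_n)}\}$ — the two choices of $w$ coming from the two $\varphi_1$ terms, and the constant absorbing the difference between $\mathcal{C}[\cdot,i\Delta]$ and $\mathcal{C}[\cdot,i\tfrac12\Delta]$.

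The only genuinely non-cosmetic point is making sure the various remainders are all controlled by the \emph{same} right-hand side quantity advertised in the statement, rather than by some larger norm. In particular, for the Klein--Gordon part the bound in \eqref{R_1} is phrased in terms of $\|\psi\|_r$, which is dominated by the commutator/derivative expression appearing in the lemma (since $\|\mathcal{C}[f_{\text{quad}},i\tfrac12\Delta](\psi,\overline\psi)\|_r$ controls, via Lemma \ref{lemma_comm} run "in reverse", or simply by the triangle inequality, a quantity of the same order as $\|\psi\|_{r+1}^2$, and $\tau^2\|\psi\|_r^2 \lesssim \tau^2\|\psi\|_{r+1}^2$). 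For the Schr\"odinger part one similarly checks that $\|\mathcal{R}_2\|_r$ from \eqref{R_2} and the $\mathcal{C}$-term from Lemma \ref{Lemma1opsi} are both $\mathcal{O}(\tau^2)$ with the $\mathcal{C}$-structure, the former being a priori bounded by the latter since $\|\mathcal{C}[f_{\text{quad}},i\tfrac12\Delta](\psi,u)\|_r \le K_1\|\psi\|_{r+1}\|u\|_{r+1}$ and the naive remainder is controlled by lower Sobolev norms. I expect the main (though still routine) obstacle to be precisely this bookkeeping: tracking which choice of the parameter $\alpha\in[0,1]$ is being used, keeping the $c^{-2\alpha}\Delta^{1+\alpha}$ term consistent with \eqref{c_trick} (where a loss of $2(1+\alpha)$ derivatives appears, versus the cleaner $\Delta^{1+\alpha}$ in the statement — one should reconcile the exponents here, presumably by the convention that $\mathcal{O}(\Delta^{1+\alpha}v)$ means the term is bounded in $\|\cdot\|_r$ by $\|v\|_{r+2(1+\alpha)}$), and verifying that nothing in the composition of outer isometric operators $e^{i\tau c\nab}$, $e^{i\frac12\tau\Delta}$, $c\nab^{-1}$ reintroduces a power of $c$.
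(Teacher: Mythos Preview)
Your proposal is correct and follows essentially the same route as the paper: the paper's proof is a one-line assembly of the remainder bounds \eqref{R_1} and \eqref{R_2}, the integral approximations of Lemmata \ref{Lemma1ou} and \ref{Lemma1opsi}, and the commutator estimate of Lemma \ref{lemma_comm}, which is exactly what you carry out (with considerably more bookkeeping spelled out). Your observations about reconciling $\mathcal{C}[\cdot,\Delta]$ with $\mathcal{C}[\cdot,i\tfrac12\Delta]$ via linearity, interpreting $\mathcal{O}(\Delta^{1+\alpha}v)$ as a loss of $2(1+\alpha)$ derivatives, and absorbing the $\|\cdot\|_r$-bounded Duhamel remainders into the dominant $\|\cdot\|_{r+1}$-controlled commutator term are all correct and implicit in the paper's terse argument.
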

\begin{proof}
This assertion follows by the bounds we have found for the remainder terms in \eqref{R_1} and \eqref{R_2}, Lemma \ref{Lemma1ou} and Lemma \ref{Lemma1opsi}, together with Lemma \ref{lemma_comm}.
\end{proof}
\subsection{Stability analysis}
\begin{lemma}
\label{stability1}
Fix $r>\frac{d}{2}$. The numerical flows $\Phi^{\tau}_K$ and $\Phi^{\tau}_S$ defined by \eqref{KGo1} and \eqref{Schro1} respectively are stable in $H^r$, namely it holds for any $v_i,w_i\in H^r$, $i\in\{1,2\}$ that
$$
\|\Phi^{\tau}_K(v_1,w_1)-\Phi^{\tau}_K(v_2,w_2)\|_r\leq \|v_1-v_2\|_r + \tau M_1 \|w_1-w_2\|_r,
$$
$$
\|\Phi^{\tau}_S(v_1,w_1)-\Phi^{\tau}_S(v_2,w_2)\|_r\leq \|w_1-w_2\|_r + \tau M_2 (\|v_1-v_2\|_r +\|w_1-w_2\|_r),
$$
where $M_1$ and $M_2$ can be chosen independently of $c$.
\end{lemma}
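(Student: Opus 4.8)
The plan is to estimate the two numerical flows directly from their closed-form expressions \eqref{KGo1} and \eqref{Schro1}, using only the elementary bounds \eqref{cnab_liniso_bound}, the stability remark \eqref{stability_trick}, and the bilinear estimate $\|fg\|_r\le C_{r,d}\|f\|_r\|g\|_r$. The key point is that every operator appearing in the schemes — $e^{i\tau c\nab}$, $e^{i\frac12\tau\Delta}$, $c\nab^{-1}$, and the $\varphi_1$ of a skew-adjoint operator — is bounded by $1$ on $H^r$ uniformly in $c$ and $\tau$, so the whole analysis reduces to exploiting the Lipschitz (in fact bilinear) structure of the nonlinearity together with a telescoping/add-and-subtract trick.

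First I would treat $\Phi^{\tau}_K$. Write $\Phi^{\tau}_K(v_1,w_1)-\Phi^{\tau}_K(v_2,w_2) = e^{i\tau c\nab}(v_1-v_2) - i\tau c\nab^{-1}e^{i\tau c\nab}\big(\overline{w_1}\,\varphi_1(i\tau(\Delta-c^2))w_1 - \overline{w_2}\,\varphi_1(i\tau(\Delta-c^2))w_2\big)$. Taking $\|\cdot\|_r$, the first term contributes exactly $\|v_1-v_2\|_r$ by \eqref{cnab_liniso_bound}. For the second term I use $\|c\nab^{-1}\|_r\le 1$ and $\|e^{i\tau c\nab}\|_r=1$ to drop those factors, then add and subtract $\overline{w_1}\,\varphi_1(i\tau(\Delta-c^2))w_2$ to split the difference into $(\overline{w_1}-\overline{w_2})\varphi_1(\cdots)w_2 + \overline{w_1}\,\varphi_1(\cdots)(w_1-w_2)$; the bilinear estimate plus the bound $\|\varphi_1(i\tau(\Delta-c^2))\|_r\le 1$ (the Fourier multiplier of $\varphi_1$ evaluated at a purely imaginary argument has modulus $\le 1$) then gives the desired $\tau M_1\|w_1-w_2\|_r$ with $M_1 = C_{r,d}\max_i\|w_i\|_r$, which is independent of $c$. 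The bound on $\Phi^{\tau}_S$ is entirely analogous: $e^{i\frac12\tau\Delta}(w_1-w_2)$ gives the leading $\|w_1-w_2\|_r$, and in the remaining term I add and subtract to isolate the differences $w_1-w_2$, $u_1-u_2$ and $\overline{u_1}-\overline{u_2}$, again using that $\|\varphi_1(i\tau(c\nab-\tfrac12\Delta))\|_r\le 1$ and $\|\varphi_1(-i\tau(c\nab+\tfrac12\Delta))\|_r\le 1$ since both arguments are purely imaginary; this yields $M_2 = \tfrac12 C_{r,d}\max(\|u_i\|_r,\|w_i\|_r)$ up to constants, independent of $c$.

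The only genuinely delicate point — and the one I would state as a separate remark before the computation — is the uniform-in-$c$ bound $\|\varphi_1(i\mu)\|_r\le 1$ for every self-adjoint operator $\mu$ (here $\mu=\tau(\Delta-c^2)$, $\tau(c\nab-\tfrac12\Delta)$, etc.). This follows because $\varphi_1(i\theta)=\frac{e^{i\theta}-1}{i\theta}=e^{i\theta/2}\,\mathrm{sinc}(\theta/2)$ for real $\theta$, whose modulus is $|\mathrm{sinc}(\theta/2)|\le 1$; hence the corresponding Fourier multiplier is bounded by $1$ uniformly in the frequency and in $c$, so the operator is a contraction on every $H^r$. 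Once this is in hand, no derivative is lost anywhere in the stability estimate — the commutator terms and the negative powers of $c$ from Lemma \ref{lemma_comm} and \eqref{c_trick} enter only in the local error (Lemma \ref{localerror1}), not here — and the constants $M_1,M_2$ depend only on $r$, $d$ and the $H^r$-norms of the arguments. Feeding these two Lipschitz estimates and the local error bounds into a standard Lady Windermere's fan / discrete Gronwall argument then produces the uniform convergence claimed in Theorem \ref{thm:1}, but that is the subject of the next subsection.
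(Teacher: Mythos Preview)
Your proposal is correct and follows exactly the same route as the paper's (two-sentence) proof: use the isometry and contraction bounds \eqref{cnab_liniso_bound} together with $|\varphi_1(i\xi)|\le 1$ for $\xi\in\mathbb{R}$, then apply the bilinear estimate after an add-and-subtract on the nonlinearity. You have in fact supplied the details the paper omits, including the $\mathrm{sinc}$ justification of the $\varphi_1$ bound; the mention of \eqref{stability_trick} in your plan is superfluous (it is not actually needed here), and the $u_i$ in the $\Phi^\tau_S$ discussion should read $v_i$, but these are cosmetic.
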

\begin{proof}
This claim follows by \eqref{cnab_liniso_bound}. In addition, we use the estimate $\vert \varphi_1 (i\xi)\vert\leq 1$ for all $\xi\in\mathbb{R}$.
\end{proof}
\subsection{Global error}
\begin{theorem}\label{thm:1}
Fix $r>\frac{d}{2}$ and assume that the solution $(u,\psi)$ of \eqref{KG}-\eqref{Schr} satisfies $u\in\mathcal{C}([0,T],H^{r+1})$, {$\psi\in\mathcal{C}([0,T],H^{r+2(1+\alpha)})$, $0\leq\alpha\leq1$}. Then there exists a $\tau_0>0$ such that for all $0<\tau\leq\tau_0$ the following estimate holds for $(u^n,\psi^n)$ defined in \eqref{KGo1} and \eqref{Schro1}
\begin{align*}
    \|u(t_n)-u^n\|_r + \|\psi(t_n)-\psi^n\|_r &\leq \tau K_1\big(\sup_{t_n\leq t\leq t_{n+1}} \|u(t)\|_{r+1},\sup_{t_n\leq t\leq t_{n+1}} \|\psi(t)\|_{r+1}\big)\\
&{+\tau c^{-2\alpha}K_2\big(\sup_{t_n\leq t\leq t_{n+1}}\|\psi(t)\|_{+2(1+\alpha)}\big)},
\end{align*}

where, in particular, $K_1$ and $K_2$ can be chosen independently of $c$.
\end{theorem}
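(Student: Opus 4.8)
The plan is to run the standard Lady Windermere's fan argument, combining the local error bounds from Lemma~\ref{localerror1} with the stability estimates from Lemma~\ref{stability1}, and then closing the resulting recursion by a discrete Gronwall argument. First I would fix $n$ and introduce the error quantities $e^n_u = u(t_n) - u^n$ and $e^n_\psi = \psi(t_n) - \psi^n$. Writing $u(t_{n+1}) = \varphi^\tau_K(u(t_n),\psi(t_n))$ and $u^{n+1} = \Phi^\tau_K(u^n,\psi^n)$, I split
\begin{align*}
e^{n+1}_u &= \big(\varphi^\tau_K(u(t_n),\psi(t_n)) - \Phi^\tau_K(u(t_n),\psi(t_n))\big) + \big(\Phi^\tau_K(u(t_n),\psi(t_n)) - \Phi^\tau_K(u^n,\psi^n)\big),
\end{align*}
and symmetrically for $e^{n+1}_\psi$. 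The first bracket is the local error, controlled by Lemma~\ref{localerror1} together with the commutator bounds of Lemma~\ref{lemma_comm}: it is $\mathcal{O}(\tau^2)\big(\|\psi(t_n)\|_{r+1}^2 + c^{-2\alpha}\|\psi(t_n)\|_{r+2(1+\alpha)}\big)$ for the $u$-component and $\mathcal{O}(\tau^2)\|\psi(t_n)\|_{r+1}\|u(t_n)\|_{r+1}$ for the $\psi$-component. The second bracket is handled by Lemma~\ref{stability1}, giving $\|e^{n+1}_u\|_r \leq \|e^n_u\|_r + \tau M_1 \|e^n_\psi\|_r + \tau^2(\cdots)$ and $\|e^{n+1}_\psi\|_r \leq \|e^n_\psi\|_r + \tau M_2(\|e^n_u\|_r + \|e^n_\psi\|_r) + \tau^2(\cdots)$.

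Adding the two inequalities, set $\mathcal{E}^n = \|e^n_u\|_r + \|e^n_\psi\|_r$. One obtains a recursion of the form $\mathcal{E}^{n+1} \leq (1 + \tau M)\mathcal{E}^n + \tau^2 L_n$, where $M = M_1 + 2M_2$ is $c$-independent and $L_n$ collects the local-error constants, hence is bounded by $K_1\big(\sup_{[0,T]}\|u\|_{r+1}, \sup_{[0,T]}\|\psi\|_{r+1}\big) + c^{-2\alpha} K_2\big(\sup_{[0,T]}\|\psi\|_{r+2(1+\alpha)}\big)$ using the regularity hypotheses $u\in\mathcal{C}([0,T],H^{r+1})$, $\psi\in\mathcal{C}([0,T],H^{r+2(1+\alpha)})$. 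Iterating from $\mathcal{E}^0 = 0$ and using $(1+\tau M)^n \leq e^{MT}$ for $n\tau \leq T$ yields $\mathcal{E}^n \leq \tau M^{-1}(e^{MT}-1) L$, which is the claimed bound (after absorbing $M^{-1}(e^{MT}-1)$ and $e^{MT}$ into the constants $K_1, K_2$, which remain $c$-independent).

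One technical point that must be addressed before the Gronwall step is that the local error bounds of Lemma~\ref{localerror1} are stated at the \emph{exact} solution, so their right-hand sides involve $\|u(t)\|_{r+1}$ and $\|\psi(t)\|_{r+1}$ (and the higher norm of $\psi$), which are finite by hypothesis and uniformly bounded on $[0,T]$; this is fine. However, the stability estimate is applied with arguments $(u^n,\psi^n)$, so I must ensure a priori that these numerical iterates stay in a bounded ball of $H^r$ (so that the constants $M_1, M_2$, which in Lemma~\ref{stability1} are in fact uniform, genuinely apply, and so that no hidden dependence on $\|u^n\|_r$ creeps in). This is the customary bootstrap: choose $\tau_0$ small enough that $\mathcal{E}^n$ stays below $1$ (say) for all $n$ with $n\tau \leq T$, which guarantees $\|u^n\|_r \leq \sup\|u\|_{r+1} + 1$ and similarly for $\psi^n$; the induction then closes because Lemma~\ref{stability1} already furnishes $c$-independent stability constants with no dependence on the size of the arguments.

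The main obstacle is essentially bookkeeping rather than a genuine difficulty: one must make sure that \emph{every} constant entering the recursion — the stability constants $M_1, M_2$, the commutator constants $K_1, K_2$ from Lemma~\ref{lemma_comm}, and the remainder constants from \eqref{R_1} and \eqref{R_2} — is traced to be independent of $c$, and that the only $c$-dependence that survives is the explicit $c^{-2\alpha}$ factor multiplying the higher-regularity norm of $\psi$. The delicate case is the Klein--Gordon local error, where the term $c^{-2\alpha}\Delta^{1+\alpha}\psi(t_n)$ arising from the Taylor expansion \eqref{taylor_cnab} of $c\nab$ must be bounded in $H^r$ by $c^{-2\alpha}\|\psi(t_n)\|_{r+2(1+\alpha)}$ — this is exactly the gain-of-negative-powers-of-$c$-versus-loss-of-derivative trade-off, and one has to check that the bilinear structure (the integral in \eqref{1o_1} involves $|e^{is\Delta/2}\psi|^2$, i.e.\ a product) does not spoil this, which it does not thanks to the bilinear estimate $\|fg\|_r \leq C_{r,d}\|f\|_r\|g\|_r$ applied after distributing the operator. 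Once this is in place, the rest is the routine fan/Gronwall machinery.
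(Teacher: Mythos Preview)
Your proposal is correct and follows essentially the same approach as the paper: the paper's own proof simply states that the result follows from a Lady Windermere's fan argument combining Lemmata~\ref{localerror1} and~\ref{stability1}, and you have spelled out exactly that argument in detail, including the bootstrap to keep the numerical iterates in a bounded $H^r$-ball and the careful tracking of $c$-independence of all constants. One small caveat: the stability constants $M_1,M_2$ in Lemma~\ref{stability1} do in fact depend on the $H^r$-size of the arguments (since the numerical flow is nonlinear), so your bootstrap step is genuinely needed rather than merely precautionary---but you have included it, so the argument closes.
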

\begin{proof}
The proof follows by means of a Lady Windermere's fan argument (see, for example \cite{HNW}), after plugging in the results obtained in Lemmata \ref{localerror1} and \ref{stability1}.
\end{proof}

{\noindent{\bf Remark.} We note that, in the fully discrete case of the non-relativistic with highest discrete frequency $\vert K \vert \ll c^{-2\alpha}$, the second term in the global error estimate presented above becomes negligible, as the contribution of the higher Sobolev norm is nearly cancelled by the very small parameter $c^{-2\alpha}$. In this case the error constant is then lead by the $H^{r+1}$ norm of the solution, thus allowing for lower regularity assumptions than in pre-exisiting methods in practice.}

\section{A second order integrator}
We dedicate this section to the derivation of a second order counterpart of the uniformly accurate low regularity integrator we have obtained in the previous section. Iterating Duhamel's formula for \eqref{Schr} yields
\begin{equation}\label{2o_1}
    \begin{aligned}
            u(t_n+\tau) &= e^{i \tau c\nab}u(t_n) - i c \nab^{-1} e^{i \tau c\nab}\int_0^{\tau} e^{-i s c\nab} \vert e^{is\frac12\Delta}\psi(t_n) \vert^2 ds \\
    &-ic\nab^{-1}e^{i \tau c\nab}\int_0^{\tau}e^{-i s c\nab}\big(  e^{-is\frac12\Delta}\overline{\psi(t_n)}\big)\mathcal{I}_{\psi}(s)ds\\
    &-ic\nab^{-1}e^{i \tau c\nab}\int_0^{\tau}e^{-i s c\nab}\big(  e^{is\frac12\Delta}\psi(t_n)\big)\overline{\mathcal{I}_{\psi}(s)}  ds + \mathcal{R}'_3,
    \end{aligned}
\end{equation}
where, if we once again iterate Duhamel's formula, we obtain
\begin{equation}\label{Ipsitilde}
    \begin{aligned}
    \mathcal{I}_{\psi}(s)&=i\frac12 e^{i\frac12s\Delta }\int_0^se^{-i\frac12\sigma\Delta}\psi(t_n+\sigma)\big(u(t_n+\sigma)+\overline{u(t_n+\sigma)}\big)d\sigma\\
    &=i\frac12 e^{i\frac12s\Delta }\int_0^se^{-i\frac12\sigma\Delta}\big(e^{i\frac12\sigma\Delta}\psi(t_n)\big)\big(e^{i\sigma c\nab}u(t_n)+e^{-i\sigma c\nab}\overline{u(t_n)}\big)d\sigma +\mathcal{R}''_3\\
    &=\Tilde{\mathcal{I}}_{\psi}(s)+\mathcal{R}''_3.
    \end{aligned}
\end{equation}
For the remainder terms $\mathcal{R}'_3$ and $\mathcal{R}''_3$, using \eqref{cnab_liniso_bound}, we find that
\begin{equation}
    \begin{aligned}
    \label{2o_rt1}
    \|\mathcal{R}'_3\|_r &\leq \bigg\| ic\nab^{-1}\int_0^{\tau}e^{-i s c\nab}\vert\mathcal{I}_{\psi}(s)\vert^2  ds \bigg\|_r\\
    &\leq \tau^3 K\big( \sup_{0\leq\xi\leq\tau}\|u(t_n+\xi)\|_r,\sup_{0\leq\xi\leq\tau}\|\psi(t_n+\xi)\|_r \big),
    \end{aligned}
\end{equation}
and, similarly,
\begin{equation}\label{2o_rtt1}
\begin{aligned}
   \|\mathcal{R}''_3\|_r &\leq \bigg\| e^{i\frac12s\Delta }\int_0^se^{-i\frac12\sigma\Delta} \mathcal{I}_{\psi}(\sigma) \big(e^{i\sigma c\nab}u(t_n)+e^{-i\sigma c\nab}\overline{u(t_n)}\big)d\sigma \bigg\|_r\\
    &+\bigg\| e^{i\frac12s\Delta }\int_0^se^{-i\frac12\sigma\Delta}\big(e^{i\frac12\sigma\Delta}\psi(t_n)\big)\big(\mathcal{I}_u(\sigma) + \overline{\mathcal{I}_u(\sigma)}\big)d\sigma \bigg\|_r \\
    &+ \bigg\| e^{i\frac12s\Delta }\int_0^se^{-i\frac12\sigma\Delta} \mathcal{I}_{\psi}(\sigma) \big(\mathcal{I}_u(\sigma) + \overline{\mathcal{I}_u(\sigma)}\big)d\sigma \bigg\|_r \\
    & \leq s^2 K\big( \sup_{0\leq\xi\leq\tau}\|u(t_n+\xi)\|_r,\sup_{0\leq\xi\leq\tau}\|\psi(t_n+\xi)\|_r \big),
\end{aligned}
\end{equation}
where $\mathcal{I}_u$ is given in \eqref{I_u}. Now, after these considerations, the expansion given in \eqref{2o_1} reads
\begin{equation}\label{2o_2}
    \begin{aligned}
            u(t_n+\tau) &= e^{i \tau c\nab}u(t_n) - i c \nab^{-1} e^{i \tau c\nab}\mathfrak{I}_u(u(t_n),\psi(t_n)) + \mathcal{R}_3,
    \end{aligned}
\end{equation}
where
\begin{align}
\label{I1}
    \mathfrak{I}_u(u(t_n),\psi(t_n)) &= \int_0^{\tau} e^{-i s c\nab} \vert e^{i\frac12s\Delta}\psi(t_n) \vert^2 ds \\
    \label{I2}
    &+\int_0^{\tau}e^{-i s c\nab}\big(  e^{-i\frac12s\Delta}\overline{\psi(t_n)}\big)\Tilde{\mathcal{I}}_{\psi}(s)ds\\
    \label{I3}
    &+\int_0^{\tau}e^{-i s c\nab}\big(  e^{i\frac12s\Delta}\psi(t_n)\big)\overline{\Tilde{\mathcal{I}}_{\psi}(s)}  ds ,
\end{align}
recall the definition of $\tilde{\mathcal{I}}_{\psi}$ is given in \eqref{Ipsitilde}. Using \eqref{2o_rt1} and \eqref{2o_rtt1}, we see that
\begin{align}
\label{2o_r1}
    \|\mathcal{R}_3\|_r\leq \tau^3 K\big( \sup_{0\leq\xi\leq\tau}\|u(t_n+\xi)\|_r,\sup_{0\leq\xi\leq\tau}\|\psi(t_n+\xi)\|_r \big),
\end{align}
where $K>0$ is chosen independently of $c$.

Prior to treating the three highly oscillatory integrals separately, we define the function
\begin{align}\label{psi2}
    \Psi_2(\xi)= \frac{e^{\xi}-\varphi_1(\xi)}{\xi},
\end{align}
recall $\varphi_1$ is given in \eqref{phi1}. We again refer to \cite{HoOs} for details on this family of functions.
\begin{lemma}[Second order approximation of the integral \eqref{I1}]
\label{Lemma2ou1}
For $0\leq\alpha\leq1$ it holds that 
\begin{align*}
    I_1(w,v) &=\int_0^{\tau} e^{-i s c\nab} \vert e^{i\frac12s\Delta}v \vert^2 ds\\
    &= \tau \big[\overline{v}\varphi_1(i \tau (\Delta-c^2)){v} +  e^{i\frac12\tau\Delta} \big( e^{-i\frac12\tau\Delta} \Psi_2(i \tau (\Delta-c^2)) v \big)\big(e^{-i\frac12\tau\Delta}\overline{v}\big)  - \overline{v}\Psi_2(i \tau (\Delta-c^2)) {v}  \big] \\
    &- i\tau^2 \varphi_1(i\tau(c\nab-c^2+\frac12\Delta))(c\nab-c^2+\frac12\Delta) \big(\overline{v}\Psi_2(i\tau(c\nab-c^2+\frac12\Delta))v\big)\\
    &+  \mathcal{O}{\big(\tau^3(\mathcal{C}^2[f_{\text{quad}}(\cdot,\cdot),i\Delta]( v, \overline{v})+c^{-4\alpha}\Delta^{2+2\alpha} \mathcal{C}[f_{\text{quad}}(\cdot,\cdot),i\Delta]( v, \overline{v}) \big)}\\
    &=\tilde{I}_1(w,v)+ \mathcal{O}{\big(\tau^3(\mathcal{C}^2[f_{\text{quad}}(\cdot,\cdot),i\Delta]( v, \overline{v})+c^{-4\alpha}\Delta^{2+2\alpha} \mathcal{C}[f_{\text{quad}}(\cdot,\cdot),i\Delta]( v, \overline{v}) \big)}.
\end{align*}
\end{lemma}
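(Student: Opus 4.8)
The plan is to mirror the structure of Lemma~\ref{Lemma1ou} but carry the Taylor expansions one order further, keeping the second-order terms explicitly and only discarding third-order remainders. First I would return to the filtered function $\mathcal{N}(s,s_1,\Delta,v)$ from \eqref{N} and write $I_1(w,v)=\int_0^\tau e^{-isc\nab}e^{-i\frac12 s\Delta}\mathcal{N}(s,s,\Delta,v)\,ds$. Two distinct expansions are now needed. For the interaction of the leading operators I would use the sharper version of \eqref{taylor_cnab}, namely $c\nab = c^2-\tfrac12\Delta+\mathcal{O}(c^{-2\alpha}\Delta^{1+\alpha})$, but now applied inside $e^{-isc\nab}=e^{-isc^2}e^{is\frac12\Delta}e^{\mathcal{O}(sc^{-2\alpha}\Delta^{1+\alpha})}$; since the leftover differential operator already produces a factor $\tau c^{-2\alpha}\Delta^{1+\alpha}$, expanding it to first order and pairing it with the first-order $s_1$-Taylor remainder of $\mathcal{N}$ yields precisely the mixed term $\tau^3 c^{-4\alpha}\Delta^{2+2\alpha}\mathcal{C}[f_{\text{quad}},i\Delta](v,\bar v)$ in the stated error. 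For the slowly varying part I would Taylor-expand $\mathcal{N}(s,s_1,\Delta,v)$ to second order in $s_1$ around $s_1=0$: the zeroth-order term integrates exactly to $\tau\bar v\varphi_1(i\tau(\Delta-c^2))v$, the first-order term $\partial_{s_1}\mathcal{N}|_{s_1=0}$ — which by the commutator identity already recorded in the proof of Lemma~\ref{Lemma1ou} equals $\mathcal{C}[f_{\text{quad}},-\tfrac i2\Delta](e^{is\Delta}v,\bar v)$ — must now be integrated exactly against $e^{-isc^2}$ rather than dropped, and the second-order term $\partial_{s_1}^2\mathcal{N}$ is bounded by $\mathcal{C}^2[f_{\text{quad}},i\Delta](v,\bar v)$ via Lemma~\ref{lemma_comm} and contributes $\mathcal{O}(\tau^3)$.

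The heart of the computation is the exact integration of the first-order term. Writing $\partial_{s_1}\mathcal{N}(s,s_1,\Delta,v)|_{s_1=0}$ out explicitly, one gets an expression of the form $-\tfrac i2\Delta\big((e^{is\Delta}v)\bar v\big)+(e^{is\Delta}\tfrac i2\Delta v)\bar v+(e^{is\Delta}v)(\tfrac i2\Delta\bar v)$; after applying the outer $e^{-is\frac12\Delta}$ and integrating $\int_0^\tau e^{-isc^2}(\cdots)\,ds$ one recognizes, using $\int_0^\tau e^{-isc^2}e^{is\Delta}\cdot(\text{polynomial in }s)\,ds$-type identities, the appearance of $\Psi_2$ defined in \eqref{psi2}; this is exactly the mechanism by which the three $\Psi_2$-terms $\tau[e^{i\frac12\tau\Delta}(e^{-i\frac12\tau\Delta}\Psi_2(i\tau(\Delta-c^2))v)(e^{-i\frac12\tau\Delta}\bar v)-\bar v\Psi_2(i\tau(\Delta-c^2))v]$ arise. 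I would double-check the bookkeeping by the consistency requirement that, at the level of Fourier symbols, the full right-hand side $\tilde I_1$ must agree with the exact integral $\int_0^\tau e^{-isc\nab}|e^{i\frac12 s\Delta}v|^2\,ds$ up to $\mathcal{O}(\tau^3)$ — in particular the $c\nab$-dependent line (the $-i\tau^2\varphi_1(\cdots)(\cdots)(\bar v\Psi_2(\cdots)v)$ term) is what remains of the $e^{-isc\nab}$ oscillation once the $\Psi_2$-correction has been split off, and its structure is forced by wanting the $\varphi_1$-and-$\Psi_2$ combination to telescope correctly.

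The main obstacle, as I see it, is disentangling the two sources of second-order error cleanly: the expansion parameter in $s_1$ (governing the commutator terms and the loss of spatial derivatives encoded by $\mathcal{C}$ and $\mathcal{C}^2$) and the expansion parameter in the $c\nab-(c^2-\tfrac12\Delta)$ discrepancy (governing the $c^{-2\alpha}$ gain). These cross in the mixed remainder term $\tau^3 c^{-4\alpha}\Delta^{2+2\alpha}\mathcal{C}[f_{\text{quad}},i\Delta](v,\bar v)$, and one has to argue carefully — presumably by a double Taylor expansion with an integral remainder in each variable and then estimating the product — that no term of the form $\tau^3 c^{-2\alpha}\Delta^{\text{(something large)}}$ without the accompanying commutator structure survives, since such a term would spoil the advertised regularity count. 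A secondary technical point is justifying that $\Psi_2(i\xi)$ and $\varphi_1(i\xi)$ applied to the self-adjoint operators $\Delta-c^2$ and $c\nab-c^2+\tfrac12\Delta$ are bounded on $H^r$ uniformly in $c$, which follows from $|\varphi_1(i\xi)|\le 1$ and the analogous elementary bound $|\Psi_2(i\xi)|\le C$ for real $\xi$, exactly as invoked in Lemma~\ref{stability1}; I would note these uniform bounds are what make the stated $\mathcal{O}$-terms meaningful as $H^r$-estimates with $c$-independent constants. Finally, the second equality in the statement is purely a renaming, $\tilde I_1(w,v)$ standing for the explicit three-line expression, so nothing extra is needed there. \qed
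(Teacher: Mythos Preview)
Your plan captures the overall architecture (two expansions, commutator-based remainders), but it misses the specific mechanism that actually produces the formula in the lemma and would therefore not prove the statement as written. The paper does \emph{not} integrate the exact first derivative $\partial_{s_1}\mathcal{N}(s,0,\Delta,v)$ against $e^{-isc^2}$. If you do that, you obtain a term of the shape $i\tau^2\,\nabla\overline v\cdot\Psi_2(i\tau(\Delta-c^2))\nabla v$ (from $\partial_{s_1}\mathcal{N}(s,0)=i\nabla(e^{is\Delta}v)\cdot\nabla\overline v$), which carries an explicit gradient and is \emph{not} the expression $e^{i\frac12\tau\Delta}\big(e^{-i\frac12\tau\Delta}\Psi_2(i\tau(\Delta-c^2))v\big)\big(e^{-i\frac12\tau\Delta}\overline v\big)-\overline v\,\Psi_2(i\tau(\Delta-c^2))v$ appearing in $\tilde I_1$. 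The latter is exactly $\mathcal{N}(\,\cdot\,,\tau,\Delta,v)-\mathcal{N}(\,\cdot\,,0,\Delta,v)$ integrated against $\tfrac{s}{\tau}e^{-isc^2}$: the paper replaces $\partial_{s_1}\mathcal{N}(s,0)$ by the finite difference $\tfrac{1}{\tau}(\mathcal{N}(s,\tau)-\mathcal{N}(s,0))$, at the cost of an additional $\mathcal{O}(\tau\,\partial^2_{s_1}\mathcal{N})$ error that is absorbed into the $\mathcal{C}^2$-remainder. This substitution is not cosmetic---it is what keeps the scheme at the regularity level of $v$ rather than $\nabla v$ and is flagged in the proof as being done ``in order to guarantee the stability of this scheme''.

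Your account of the $c\nab$-dependent line is also too vague to reach the stated term. The paper writes $e^{-isc\nab}e^{-i\frac12 s\Delta}=e^{-isc^2}e^{-is(c\nab-c^2+\frac12\Delta)}$ and then Taylor-expands the second exponential to first order in $s$, $e^{-is(c\nab-c^2+\frac12\Delta)}=1-is(c\nab-c^2+\tfrac12\Delta)+\mathcal{R}_1$, with $\|\mathcal{R}_1\|_r\le \tau^3 K(c^{-4\alpha}\Delta^{2+2\alpha}|v|^2)$ by \eqref{taylor_cnab}. This splits $I_1$ into two integrals: the first is handled by the finite-difference argument above, while the second, $\int_0^\tau e^{-isc^2}\,is(c\nab-c^2+\tfrac12\Delta)\mathcal{N}(s,s,\Delta,v)\,ds$, needs only the zeroth-order term $\mathcal{N}(s,0)=(e^{is\Delta}v)\overline v$ (its $s_1$-remainder, combined with the prefactor $s(c\nab-c^2+\tfrac12\Delta)=\mathcal{O}(sc^{-2\alpha}\Delta^{1+\alpha})$, gives the mixed $c^{-4\alpha}\Delta^{2+2\alpha}\mathcal{C}$-error) and integrates to $i\tau^2(c\nab-c^2+\tfrac12\Delta)\big(\overline v\,\Psi_2 v\big)$. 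The extra $\varphi_1(i\tau(c\nab-c^2+\tfrac12\Delta))$ factor you see in the statement is then \emph{inserted by hand} afterwards, again for stability, using \eqref{stability_trick}; this produces a further $\mathcal{R}_4$ absorbed into the stated error. None of this is ``forced by telescoping''; it is a deliberate two-step decomposition that you should reproduce.
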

\begin{proof}
For $\mathcal{N}$ defined in \eqref{N} we see that
\begin{equation}\label{2o_i1}
    \begin{aligned}
           I_1(v)&= \int_0^{\tau} e^{-i s c\nab} e^{-i\frac12 s \Delta} \mathcal{N}(s,s,\Delta,v)  ds\\
           &{=\int_0^{\tau} e^{-i s c^2} e^{-i\frac12 s (c\nab-c^2+\frac12\Delta)} \mathcal{N}(s,s,\Delta,v)  ds}\\
           &{= \int_0^{\tau} e^{-i s c^2} (1-is(c\nab-c^2+\frac12\Delta)) \mathcal{N}(s,s,\Delta,v)  ds + \mathcal{R}_1}\\
           &{= \int_0^{\tau} e^{-i s c^2} \mathcal{N}(s,s,\Delta,v)  ds - \int_0^{\tau} e^{-i s c^2} is(c\nab-c^2+\frac12\Delta)\mathcal{N}(s,s,\Delta,v)  ds +\mathcal{R}_1},\\
    \end{aligned}
\end{equation}
where, by \eqref{c_trick}, it holds
$$
\|\mathcal{R}_1\|_r \leq \tau^3 K \big(c^{-4\alpha}\Delta^{2+2\alpha}\vert v \vert^2 \big).
$$
We tackle the first integral in \eqref{2o_i1}. A second order expansion of $\mathcal{N}$ reads
$$
\mathcal{N}(s,s,\Delta,v)=\mathcal{N}(s,0,\Delta,v)+s\partial_{s_1}\mathcal{N}(s,s_1,\Delta,v)_{s_1=0} + \int_0^s \int_0^{\sigma} \partial^2_{s_1}\mathcal{N}(s,s_1,\Delta,v) \,ds_1\, d\sigma,
$$
where $\partial^2_{s_1}\mathcal{N}(s,s_1,\Delta,v)$ obeys 
\begin{align*}
    \partial^2_{s_1}\mathcal{N}(s,s_1,\Delta,v) =e^{i\frac12 s_1\Delta}\mathcal{C}^2[f_{\text{quad}}(\cdot,\cdot),-i\tfrac{1}{2}\Delta](e^{-i\frac12 s_1\Delta}e^{is\Delta}v , e^{-i\frac12s_1\Delta}\overline{v}),
\end{align*}
recall the notation
$$
\mathcal{C}^2[f_{\text{quad}}(\cdot,\cdot),i\tfrac{1}{2}\Delta](v , w) = \mathcal{C}[\mathcal{C}[f_{\text{quad}}(\cdot,\cdot),i\tfrac{1}{2}\Delta],i\tfrac{1}{2}\Delta](v , w).
$$
In order to guarantee the stability of this scheme, we employ a finite difference approximation of $\partial_{s_1}\mathcal{N}(s,0,\Delta,v)$, namely, for $0\leq \tau\leq s$,
$$
\partial_{s_1}\mathcal{N}(s,0,\Delta,v) = \frac{\mathcal{N}(s,\tau,\Delta,v)-\mathcal{N}(s,0,\Delta,v)}{\tau} + \mathcal{O}(\tau\partial^2_{s_1}\mathcal{N}(s,s_1,\Delta,v)).
$$
With this in mind and using definition \eqref{psi2}, we see that \eqref{2o_i1} reads
\begin{equation}\label{intu11}
    \begin{aligned}
    \int_0^{\tau} e^{-i s c^2} \mathcal{N}(s,s,\Delta,v)  ds &=\int_0^{\tau} e^{-i s c^2} \bigg(\mathcal{N}(s,0,\Delta,v)+\frac{s}{\tau}\big( \mathcal{N}(s,\tau,\Delta,v)-\mathcal{N}(s,0,\Delta,v) \big) \bigg) ds+ \mathcal{R}_1+ \mathcal{R}_2
    \\&= \tau \big[\overline{v}\varphi_1(i \tau (\Delta-c^2)){v} +  e^{i\frac12\tau\Delta} \big( e^{-i\frac12\tau\Delta} \Psi_2(i \tau (\Delta-c^2)) v \big)\big(e^{-i\frac12\tau\Delta}\overline{v}\big)  \\
    &- \overline{v}\Psi_2(i \tau (\Delta-c^2)) {v}  \big] + \mathcal{R}_1+ \mathcal{R}_2,
    \end{aligned}
\end{equation}
where $\mathcal{R}_2$ satisfies
\begin{align*}
    \|\mathcal{R}_2\|_r &\leq \tau^3 K\big( \mathcal{C}^2[f_{\text{quad}}(\cdot,\cdot),i\Delta]( v, \overline{v}) \big).
\end{align*}
As for the second integral in \eqref{2o_i1}, we note that it suffices to carry out a Taylor expansion up to first order in $s$. We obtain
\begin{equation}\label{intu12}
    \begin{aligned}
    &\int_0^{\tau} e^{-i s c^2} is(c\nab-c^2+\frac12\Delta)\mathcal{N}(s,s,\Delta,v)  ds\\ &=\int_0^{\tau} e^{-i s c^2} is(c\nab-c^2+\frac12\Delta)\mathcal{N}(s,0,\Delta,v)  ds + \mathcal{R}_3\\
    &=i\tau^2 (c\nab-c^2+\frac12\Delta) \big(\overline{v}\Psi_2(i\tau(c\nab-c^2+\frac12\Delta))v\big)+ \mathcal{R}_3\\
    &=i\tau^2 \varphi_1(i\tau(c\nab-c^2+\frac12\Delta))(c\nab-c^2+\frac12\Delta) \big(\overline{v}\Psi_2(i\tau(c\nab-c^2+\frac12\Delta))v\big)+ \mathcal{R}_3+ \mathcal{R}_4
    \end{aligned}
\end{equation}
where, given \eqref{c_trick} and the above first order Taylor expansion of $N(s,s,\Delta,v)$, $\mathcal{R}_3$ satisfies
$$
\|\mathcal{R}_3 \|_r\leq \tau^3 K\big( c^{-2\alpha}\Delta^{1+\alpha} \mathcal{C}[f_{\text{quad}}(\cdot,\cdot),i\Delta]( v, \overline{v}) \big).
$$
We note that in the last step of \eqref{intu12} we have introduced the factor $\varphi_1(i\tau(c\nab-c^2+\frac12\Delta))$ in order to ensure stability and, by \eqref{stability_trick}, \eqref{c_trick} and the estimate obtained for $\mathcal{R}_3$, $\mathcal{R}_4$ satisfies
$$
\|\mathcal{R}_4 \|_r\leq \tau^3 K\big( c^{-4\alpha}\Delta^{2+2\alpha} \mathcal{C}[f_{\text{quad}}(\cdot,\cdot),i\Delta]( v, \overline{v}) \big).
$$
Using \eqref{intu11} and \eqref{intu12} we achieve the desired second order approximation of \eqref{2o_i1}.
\end{proof}

\begin{lemma}[Second order approximation of the integral \eqref{I2}]
\label{Lemma2ou2}
For $0\leq\alpha\leq1$ it holds that
\begin{align*}
    I_2(w,v)&=\int_0^{\tau}e^{-i s c\nab}\big(  e^{-i\frac12s\Delta}\overline{v}\big)\Tilde{\mathcal{I}}_{\psi}(s)ds \\
    &= \frac{\tau}{2c^2}\overline{v} \big[ (\varphi_1(it(\Delta+2c^2))-\varphi_1(it(\Delta+c^2)))vw - (\varphi_1(it\Delta)-\varphi_1(it(\Delta+c^2)))v\overline{w}  \big]
    \\&+\mathcal{O}{\big(\tau^3(\mathcal{C}[f_{\text{quad}}(\cdot,\cdot),i\Delta](v , w)+\Delta w + c^{-2\alpha}\Delta^{1+\alpha}(vw) )\big)}\\
    &=\tilde{I}_2(w,v) +\mathcal{O}{\big(\tau^3(\mathcal{C}[f_{\text{quad}}(\cdot,\cdot),i\Delta](v , w)+\Delta w + c^{-2\alpha}\Delta^{1+\alpha}(vw))\big)}.
\end{align*}
\end{lemma}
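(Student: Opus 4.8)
The plan is to mirror the proofs of Lemmas \ref{Lemma1opsi} and \ref{Lemma2ou1}, now applied to the \emph{nested} oscillatory integral produced by the once-more iterated term $\tilde{\mathcal{I}}_\psi$ in \eqref{Ipsitilde}. After replacing $\psi(t_n)$ by $v$ and $u(t_n)$ by $w$, the quantity to be approximated is
$$
I_2(w,v) = \frac{i}{2}\int_0^\tau\!\!\int_0^s e^{-isc\nab}\big(e^{-i\frac12 s\Delta}\overline{v}\big)\,e^{i\frac12 s\Delta}\Big[e^{-i\frac12\sigma\Delta}\big(e^{i\frac12\sigma\Delta}v\big)\big(e^{i\sigma c\nab}w + e^{-i\sigma c\nab}\overline{w}\big)\Big]\, d\sigma\, ds .
$$
Since $I_2$ is itself of size $\mathcal{O}(\tau^2)$ (the inner integral contributes one extra power of $\tau$), a relative first-order approximation is enough: in contrast with Lemma \ref{Lemma2ou1}, only zeroth-order Taylor expansions of the filtered functions are required and the function $\Psi_2$ does not enter.

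First I would disentangle the two leading operators exactly as in \eqref{taylor_cnab}--\eqref{c_trick}, writing $e^{\pm i\sigma c\nab} = e^{\pm i\sigma c^2}e^{\mp i\frac12\sigma\Delta}e^{\pm i\sigma R}$ with $R = c\nab - c^2 + \tfrac12\Delta = \mathcal{O}(\Delta^{1+\alpha}/c^{2\alpha})$, and similarly $e^{-isc\nab} = e^{-isc^2}e^{i\frac12 s\Delta}e^{-isR}$; dropping each remaining factor $e^{\pm i(\cdot)R}$ costs, after the two integrations, an error of order $\tau^3 c^{-2\alpha}\Delta^{1+\alpha}(vw)$ by \eqref{c_trick}, which is the first advertised error class. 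Next I would introduce, exactly as in \eqref{N}, a filtered function for the inner $\sigma$-integral and one for the outer $s$-integral, collecting all the remaining $\Delta$-flows; by the product rule (Lemma \ref{lemma_comm}) the derivative of each in its filtering variable, evaluated on the diagonal, is a commutator $\mathcal{C}[f_{\text{quad}}(\cdot,\cdot),i\tfrac12\Delta](\cdot,\cdot)$ of the slow factors. A zeroth-order Taylor expansion isolates the leading terms, and the integrated remainders are controlled, via Lemma \ref{lemma_comm}, by $\tau^3 \mathcal{C}[f_{\text{quad}}(\cdot,\cdot),i\Delta](v,w)$, the second error class. The $\Delta w$ class comes from one further simplification: after the inner integration the $w$-branch leaves the factor $s\,\varphi_1(is(c\nab-\tfrac12\Delta))w$, and in order to decouple the $\Delta$ acting on $w$ from the outer phases I would freeze that $\Delta$-flow on $w$ at $\sigma=0$, paying $\mathcal{O}(\sigma\,\Delta w)$ and hence $\mathcal{O}(\tau^3\Delta w)$ in total --- precisely the ``loss of one derivative in the Klein--Gordon variable'' trade-off announced in the introduction. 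What is then left are integrals whose integrands are products of $v,\overline{v},w,\overline{w}$ with pure exponentials $e^{\pm isc^2}$, $e^{\pm i\sigma c^2}$, $e^{\pm i\frac12 s\Delta}$; these are integrated exactly (first in $\sigma$, then in $s$) using $\varphi_1$, and the factor $\tfrac{1}{ic\nab} = \tfrac{1}{ic^2} + \mathcal{O}(\Delta/c^4)$ arising from $\int_0^s e^{\pm i\sigma c\nab}\,d\sigma$ produces the prefactor $\tfrac{\tau}{2c^2}$, while the recombination of $e^{-isc^2}$ with the phases generated by the two integrations yields the stated differences of $\varphi_1$-functions at the frequencies $\Delta$, $\Delta+c^2$, $\Delta+2c^2$.

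The main obstacle is the bookkeeping rather than any single estimate. Because the two integrations are nested and both the inner and the outer integrand are genuine \emph{products} of $s$- (resp. $\sigma$-) dependent quantities --- and because $e^{-isc\nab}$ does not distribute over a product --- one must fix the filtered functions and the order of the three approximation moves (replacing $c\nab$ by $c^2-\tfrac12\Delta$, freezing the Schr\"odinger slow flows, freezing the $\Delta$-flow on $w$) so that no error leaves its designated class and so that the exactly integrated exponentials reassemble into exactly the claimed $\varphi_1$-combination $\tilde I_2$. Once the filtered functions are in place, the commutator and the $c^{-2\alpha}$ bounds follow immediately, just as in Lemmas \ref{Lemma1opsi} and \ref{Lemma2ou1}.
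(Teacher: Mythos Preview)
Your proposal is correct and follows essentially the same route as the paper: first approximate the inner integral $\tilde{\mathcal{I}}_\psi(s)$ via Lemma~\ref{Lemma1opsi} (the commutator error), then replace $s\,\varphi_1(\pm is(c\nab\mp\tfrac12\Delta))$ by $s\,\varphi_1(\pm isc^2)$ via \eqref{anastasiya} (your ``freeze the $\Delta$-flow on $w$'' and $c^{-2\alpha}\Delta^{1+\alpha}$ moves combined into one step), and finally treat the remaining outer integral with a single filtered function as in Lemma~\ref{Lemma2ou1}. Your ordering differs slightly---you disentangle $c\nab$ before introducing the filtered functions, and you work with two filtered functions rather than invoking Lemma~\ref{Lemma1opsi} directly---but the ingredients and the three error classes are exactly those of the paper.
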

\begin{proof}
Applying the result found in \eqref{Schro1} within $\tilde{\mathcal{I}}_{\psi}(s)$, we find that
\begin{align*}
    I_2(w,v)&=\int_0^{\tau}e^{-i s c\nab}\big(  e^{-i\frac12s\Delta}\overline{v}\big)\Tilde{\mathcal{I}}_{\psi}(s)ds\\
    &=i \frac12 \int_0^{\tau}e^{-i s c\nab}\big(  e^{-i\frac12s\Delta}\overline{v}\big) \big( se^{i\frac12s\Delta}v(\varphi_1(is(c\nab-\tfrac{1}{2}\Delta))w + \varphi_1(-is(c\nab +\tfrac{1}{2}\Delta))\overline{w}) \big)  ds\\& +\mathcal{R}_1,
\end{align*} 
where, by Lemma \ref{Lemma1opsi} and with \eqref{cnab_liniso_bound}, it holds for $\mathcal{R}_1$ that

\begin{equation*}
    \begin{aligned}
        \| \mathcal{R}_1\|_r \leq \tau^3 K\big(\mathcal{C}[f_{\text{quad}}(\cdot,\cdot),i\tfrac{1}{2}\Delta](v, w)\big).
    \end{aligned}
\end{equation*}

Note that, by \eqref{c_trick} it holds formally that
\begin{equation}\label{anastasiya}
    \begin{aligned}
    s\varphi_1(is(c\nab-\tfrac{1}{2}\Delta)) &= \int_0^s e^{i\sigma(c\nab-\tfrac{1}{2}\Delta)}d\sigma\\
    & =s\varphi_1(isc^2)+ s^2\mathcal{O}(\Delta + c^{-2\alpha}\Delta^{1+\alpha}). 
    \end{aligned}
\end{equation}
Thus,
\begin{equation*}
    \begin{aligned}
    I_2(w,v)=i\frac12\int_0^{\tau}e^{-i s c\nab}\big(  e^{-i\frac12s\Delta}\overline{v}\big) \big( se^{i\frac12s\Delta}v (\varphi_1(isc^2)w+ \varphi_1(-isc^2)\overline{w}) \big)  ds+\mathcal{R}_1 +\mathcal{R}_2,
\end{aligned}
\end{equation*}
and, by \eqref{anastasiya}, $\mathcal{R}_2$ fulfills the bound 
$$
\| \mathcal{R}_2\|_r \leq \tau^3 K(v(\Delta + c^{-2\alpha}\Delta^{1+\alpha})w).
$$
We now let
$$
\mathcal{N}(s,s_1,v,w) = e^{i\frac12s_1\Delta}\big(  e^{-i\frac12s_1\Delta}\overline{v}\big) \big( e^{is\Delta}e^{-i\frac12s_1\Delta}v (\varphi_1(isc^2)w+ \varphi_1(-isc^2)\overline{w}) \big),
$$
and the assertion is obtained following the same line of argumentation as in Lemma \ref{Lemma2ou1}.
\end{proof}

As for the third integral, following analogous steps as the ones that lead to the approximation of \eqref{I2}, we obtain the following second order approximation of \eqref{I3} for $0\leq\alpha\leq1$
\begin{equation}
\label{2o_i3_final}
    \begin{aligned}
    {I_3(w,v)} &= \int_0^{\tau}e^{-i s c\nab}\big(  e^{i\frac12s\Delta}v\big)\bigg( -i\frac12 e^{-i\frac12s\Delta }\int_0^se^{i\frac12\sigma\Delta}\big(e^{-i\frac12\sigma\Delta}\overline{v}\big)\big(e^{i\sigma c\nab}w+e^{-i\sigma c\nab}\overline{w}\big)d\sigma \bigg)  ds \\
    &= \frac{\tau}{2c^2} \big[-\overline{v}w (\varphi_1(i\tau\Delta)-\varphi_1(i\tau(\Delta-c^2)))v+\overline{v}\overline{w}(\varphi_1(i\tau(\Delta-2c^2))-\varphi_1(i\tau(\Delta-c^2)))v\big]\\
    &+ \mathcal{O}{\big(\tau^3(\mathcal{C}[f_{\text{quad}}(\cdot,\cdot),i\Delta](v , w)+\Delta w + c^{-2\alpha}\Delta^{1+\alpha}(vw))\big)}\\
    &= \tilde{I}_3(w,v) +\mathcal{O}{\big(\tau^3(\mathcal{C}[f_{\text{quad}}(\cdot,\cdot),i\Delta](v , w)+\Delta w + c^{-2\alpha}\Delta^{1+\alpha}(vw))\big)}.
    \end{aligned}
\end{equation}

Collecting our results from Lemma \ref{Lemma2ou1}, Lemma \ref{Lemma2ou2} and \eqref{2o_i3_final}, together with the bound proven in Lemma \ref{lemma_comm}, yields the following second order approximation of the oscillatory integral $\mathfrak{I}_u(w,v)$ in \eqref{2o_2}.
\begin{cor}\label{cor:I_u}
For $0\leq\alpha\leq1$ it holds that
\begin{equation*}
    \begin{aligned}
            \mathfrak{I}_u(w,v) &= \tilde{I}_1(w,v)+\tilde{I}_2(w,v)+\tilde{I}_3(w,v)+\mathcal{O}{ (\tau^3(\Delta w)(\Delta v))}\\
            &= \tilde{\mathfrak{I}}_u(w,v)+\mathcal{O}{\big(\tau^3((\Delta w)(\Delta v) + c^{-2\alpha}\Delta^{1+\alpha}(vw) + c^{-4\alpha}\Delta^{2+2\alpha}(vw))\big)},
    \end{aligned}
\end{equation*}
\end{cor}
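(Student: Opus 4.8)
The plan is to read off the statement by adding together the three second-order approximations just proved. By the definitions \eqref{I1}--\eqref{I3} we have $\mathfrak{I}_u(w,v) = I_1(w,v) + I_2(w,v) + I_3(w,v)$, where $I_1$ depends only on $v=\psi(t_n)$ while $I_2,I_3$ depend on both $v$ and $w=u(t_n)$. Applying Lemma \ref{Lemma2ou1} to $I_1$, Lemma \ref{Lemma2ou2} to $I_2$ and the approximation \eqref{2o_i3_final} to $I_3$, and writing $\tilde{\mathfrak{I}}_u(w,v) = \tilde{I}_1(w,v) + \tilde{I}_2(w,v) + \tilde{I}_3(w,v)$, the task reduces to collecting the three $\mathcal{O}(\tau^3\,\cdot\,)$ remainders and expressing them in terms of Sobolev norms.

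First I would treat the $c$-independent part of the remainders. From Lemma \ref{Lemma2ou1} this is the double commutator $\mathcal{C}^2[f_{\text{quad}}(\cdot,\cdot),i\Delta](v,\overline{v})$, and from Lemma \ref{Lemma2ou2} and \eqref{2o_i3_final} it consists of $\mathcal{C}[f_{\text{quad}}(\cdot,\cdot),i\Delta](v,w)$ together with the $\Delta w$ term. By Lemma \ref{lemma_comm} one has $\|\mathcal{C}^2[f_{\text{quad}}(\cdot,\cdot),i\Delta](v,\overline{v})\|_r \lesssim \|v\|_{r+2}^2$, i.e. a contribution of the schematic form $(\Delta v)(\Delta v)$; the single commutator is controlled by $\|v\|_{r+1}\|w\|_{r+1}$ and, using the $H^r$-bilinear estimate, the $\Delta w$ term by $\|v\|_r\|w\|_{r+2}$. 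All three are therefore absorbed into the single term $(\Delta w)(\Delta v)$, which accounts for the first summand in the error.

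It then remains to carry along the $c$-dependent remainders, which all stem from the fractional Taylor expansion \eqref{taylor_cnab} of $c\nab$ and its quantitative form \eqref{c_trick}. From $I_1$ one picks up $c^{-4\alpha}\Delta^{2+2\alpha}\mathcal{C}[f_{\text{quad}}(\cdot,\cdot),i\Delta](v,\overline{v})$, which — bounding the inner commutator crudely via Lemma \ref{lemma_comm} and keeping only the highest derivative count — is of the form $c^{-4\alpha}\Delta^{2+2\alpha}(vw)$; from $I_2$ and $I_3$ one picks up $c^{-2\alpha}\Delta^{1+\alpha}(vw)$. Assembling these with the $c$-independent part yields the error as displayed (the two displayed lines being the compact and the fully explicit versions of the same bound). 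The analytic work is already spent in Lemmata \ref{Lemma2ou1}, \ref{Lemma2ou2} and in \eqref{2o_i3_final}, so the only point requiring care is the bookkeeping: checking that, uniformly in $0\le\alpha\le1$, the single-commutator and $\Delta w$ contributions are dominated by $(\Delta w)(\Delta v)$, and that $c^{-4\alpha}\Delta^{2+2\alpha}(vw)$ is the remainder carrying the largest loss of derivatives — the price of the $c^{-4\alpha}$ gain in the non-relativistic regime.
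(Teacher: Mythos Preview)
Your proposal is correct and follows precisely the paper's own argument: the corollary is obtained by collecting the remainders from Lemma~\ref{Lemma2ou1}, Lemma~\ref{Lemma2ou2} and \eqref{2o_i3_final}, and then invoking Lemma~\ref{lemma_comm} to translate the commutator terms into Sobolev-norm losses. Your additional bookkeeping of which term carries which derivative loss is a welcome elaboration of what the paper leaves implicit.
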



Finally, Corollary \ref{cor:I_u} leads to the following second order approximation based on \eqref{2o_2}:
\begin{equation}\label{KGo2}
    \begin{aligned}
            u^{n+1}=e^{i \tau c\nab}u^n-ic\nab^{-1}e^{i \tau c\nab}\tilde{\mathfrak{I}}_u(u^n,\psi^n).
    \end{aligned}
\end{equation}

We may now consider Duhamel's formula for \eqref{Schr}, where we iterate Duhamel's formula for \eqref{KG} and \eqref{Schr} respectively.
\begin{align*}
    \psi(t_n+\tau) &= e^{i\frac12\tau\Delta}\psi(t_n) + i\frac12 e^{i\frac12\tau\Delta}\mathfrak{I}_{\psi}(u(t_n),\psi(t_n)) + \mathcal{R}_4,
\end{align*}
where $\mathcal{R}_4$ fulfills 
\begin{align}\label{R_4}
    \|\mathcal{R}_4\|_r \leq \bigg\| \int_0^{\tau}e^{-i\frac12s\Delta}\mathcal{I}_{\psi}(s)(\mathcal{I}_u(s) + \overline{\mathcal{I}_u(s)})ds \bigg\|_r
\leq \tau^3  K\big(\sup_{0\leq \xi\leq \tau} \|u(t_n+\xi)\|_r,\sup_{0\leq \xi\leq \tau} \|\psi(t_n+\xi)\|_r\big),
\end{align}
and the integral $\mathfrak{I}_{\psi}$ reads
\begin{align}
    \mathfrak{I}_{\psi}(w,v)&= \int_0^{\tau}e^{-i\frac12s\Delta}\big( e^{i\frac12s\Delta}\psi(t_n) \big)\big( e^{isc\nab}u(t_n) + e^{-isc\nab}\overline{u(t_n)} \big)ds\label{J1}\\
    &+\int_0^{\tau}e^{-i\frac12s\Delta}\mathcal{I}_{\psi}(s)\big( e^{isc\nab}u(t_n) + e^{-isc\nab}\overline{u(t_n)} \big)ds\label{J2}\\
    &+\int_0^{\tau}e^{-i\frac12s\Delta}\big( e^{i\frac12s\Delta}\psi(t_n) \big)(\mathcal{I}_u(s) + \overline{\mathcal{I}_u(s)})ds.\label{J3}
\end{align}
{Recall that  $\mathcal{I}_u$ is given by \eqref{I_u} and $\mathcal{I}_{\psi}$ by \eqref{I_psi}.} We may now tackle the three highly oscillatory integrals separately.

\begin{lemma}[Second order approximation of the integral \eqref{J1}]\label{lemma:J1}
It holds
\begin{align*}
     J_1(w,v)&=\int_0^{\tau}e^{-i\frac12s\Delta}\big( e^{i\frac12s\Delta}v \big)\big( e^{isc\nab}w + e^{-isc\nab}\overline{w} \big)ds\\
&= \tau \big[ v\varphi_1(i\tau(c\nab-\tfrac{1}{2}\Delta))w  + v\varphi_1(-i\tau(c\nab+\tfrac{1}{2}\Delta))\overline{w}  \\
&+ e^{-i\frac12\tau\Delta}\big(e^{i\frac12\tau\Delta}v\big)\big[e^{i\frac12\tau\Delta}\Psi_2(i\tau(c\nab-\tfrac{1}{2}\Delta))w+ e^{i\frac12\tau\Delta}\Psi_2(-i\tau(c\nab+\tfrac{1}{2}\Delta))\overline{w}\big]\\
    &- v\Psi_2(i\tau(c\nab-\tfrac{1}{2}\Delta)){w} - v\Psi_2(-i\tau(c\nab+\tfrac{1}{2}\Delta))\overline{w} \big] + \mathcal{O}{(\tau^3 \mathcal{C}^2[f_{\text{quad}}(\cdot,\cdot),i\tfrac{1}{2}\Delta](v , w))}
    \\&=\tilde{J}_{1}(w,v) + \mathcal{O}{(\tau^3 \mathcal{C}^2[f_{\text{quad}}(\cdot,\cdot),i\tfrac{1}{2}\Delta](v , w))}.
\end{align*}
\end{lemma}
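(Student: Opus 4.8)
The plan is to follow the proof of Lemma~\ref{Lemma2ou1}, now applied to the mixed phase $e^{\pm isc\nab}$ appearing in \eqref{J1}. I would reuse the filtered function $\mathcal N(s,s_1,\Delta,v,w)$ introduced in the proof of Lemma~\ref{Lemma1opsi}, for which $J_1(w,v)=\int_0^\tau \mathcal N(s,s,\Delta,v,w)\,ds$, and Taylor expand it in the variable $s_1$ around $s_1=0$ to second order,
\[
\mathcal N(s,s,\Delta,v,w)=\mathcal N(s,0,\Delta,v,w)+s\,\partial_{s_1}\mathcal N(s,s_1,\Delta,v,w)\big|_{s_1=0}+\int_0^s\!\!\int_0^\sigma\partial^2_{s_1}\mathcal N(s,s_1,\Delta,v,w)\,ds_1\,d\sigma .
\]
Exactly as in the proof of Lemma~\ref{Lemma1opsi} one has $\partial_{s_1}\mathcal N=e^{-i\frac12 s_1\Delta}\,\mathcal C[f_{\text{quad}}(\cdot,\cdot),i\tfrac12\Delta]\big(e^{i\frac12 s_1\Delta}v,\,e^{i\frac12 s_1\Delta}(e^{isc\nab}w+e^{-isc\nab}\overline w)\big)$, and iterating this identity yields the analogous formula for $\partial^2_{s_1}\mathcal N$ with $\mathcal C$ replaced by $\mathcal C^2$. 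The crucial observation is that $e^{\pm isc\nab}$ and $e^{-i\frac12 s\Delta}$ are both Fourier multipliers in $\Delta$ and therefore commute, so that no analogue of the fractional expansion \eqref{taylor_cnab}--\eqref{c_trick} is needed here; the remaining $s$-dependence is integrated exactly, which is precisely why only $\mathcal C^2[f_{\text{quad}}(\cdot,\cdot),i\tfrac12\Delta]$, and no negative powers of $c$, enters the error.

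For the first-order term I would apply the finite-difference substitution used in the proof of Lemma~\ref{Lemma2ou1}, $\partial_{s_1}\mathcal N(s,0,\Delta,v,w)=\tau^{-1}\big(\mathcal N(s,\tau,\Delta,v,w)-\mathcal N(s,0,\Delta,v,w)\big)+\mathcal O\big(\tau\,\partial^2_{s_1}\mathcal N(s,s_1,\Delta,v,w)\big)$, so that the approximation of $J_1(w,v)$ reduces to $\int_0^\tau\big(\mathcal N(s,0,\Delta,v,w)+\tfrac s\tau(\mathcal N(s,\tau,\Delta,v,w)-\mathcal N(s,0,\Delta,v,w))\big)\,ds$ up to an $\mathcal O(\tau^3)$ remainder. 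Inserting $\mathcal N(s,0,\Delta,v,w)=v\big(e^{is(c\nab-\frac12\Delta)}w+e^{-is(c\nab+\frac12\Delta)}\overline w\big)$ and $\mathcal N(s,\tau,\Delta,v,w)=e^{-i\frac12\tau\Delta}\big((e^{i\frac12\tau\Delta}v)(e^{i\frac12\tau\Delta}(e^{is(c\nab-\frac12\Delta)}w+e^{-is(c\nab+\frac12\Delta)}\overline w))\big)$, I would evaluate the $s$-integrals explicitly through $\int_0^\tau e^{is\xi}\,ds=\tau\varphi_1(i\tau\xi)$ and $\int_0^\tau s\,e^{is\xi}\,ds=\tau^2\Psi_2(i\tau\xi)$, used with $\xi=c\nab-\tfrac12\Delta$ and $\xi=-(c\nab+\tfrac12\Delta)$. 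Collecting the three resulting blocks reproduces the asserted $\tilde J_1(w,v)$ verbatim, and every operator appearing in it is of the form $\varphi_1(\pm i\tau(\cdots))$ or $\Psi_2(\pm i\tau(\cdots))$ with a self-adjoint argument, hence uniformly bounded on $H^r$ by $|\varphi_1(i\xi)|\le1$ for $\xi\in\mathbb R$ and the analogous bound for $\Psi_2$; this is what secures stability.

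It remains to bound the remainder, which is the sum of the Taylor tail $\int_0^\tau\!\int_0^s\!\int_0^\sigma\partial^2_{s_1}\mathcal N\,ds_1\,d\sigma\,ds$ and the finite-difference error $\int_0^\tau \tfrac s\tau\,\mathcal O\big(\tau\,\partial^2_{s_1}\mathcal N\big)\,ds$, each of which is cubic in the integration variable. Invoking the unitarity estimates \eqref{cnab_liniso_bound}, so that the twists $e^{\pm i\frac12 s_1\Delta}$ and the phases $e^{\pm isc\nab}$ leave every Sobolev norm invariant, together with Lemma~\ref{lemma_comm} applied to the two arguments $e^{i\frac12 s_1\Delta}v$ and $e^{i\frac12 s_1\Delta}(e^{isc\nab}w+e^{-isc\nab}\overline w)$, whose $H^{r+2}$ norms are bounded by $\|v\|_{r+2}$ and $2\|w\|_{r+2}$ respectively, gives the claimed estimate $\mathcal O\big(\tau^3\,\mathcal C^2[f_{\text{quad}}(\cdot,\cdot),i\tfrac12\Delta](v,w)\big)$. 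The main obstacle is exactly this bookkeeping in the middle step: one must check that the finite-difference substitution genuinely turns the $s_1$-derivative into bounded $\Psi_2(\pm i\tau(\cdots))$-factors, rather than into a bare $(c\nab\mp\tfrac12\Delta)$, which would cost two derivatives and reintroduce growth in $c$; and that the extra $\overline w$-branch contributes only the single commutator term written in the statement. Once this is verified, the argument is structurally identical to that of Lemma~\ref{Lemma2ou1}.
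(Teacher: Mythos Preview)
Your proposal is correct and follows exactly the route sketched in the paper's proof: you use the same filtered function $\mathcal N(s,s_1,\Delta,v,w)$ from Lemma~\ref{Lemma1opsi}, perform a second-order Taylor expansion in $s_1$, replace $\partial_{s_1}\mathcal N(s,0,\Delta,v,w)$ by the finite difference $\tau^{-1}(\mathcal N(s,\tau,\cdot)-\mathcal N(s,0,\cdot))$, and integrate the remaining pure phases $e^{is(c\nab\mp\frac12\Delta)}$ exactly via $\varphi_1$ and $\Psi_2$. Your additional observation that $e^{\pm isc\nab}$ and $e^{-i\frac12 s\Delta}$ commute, so that no analogue of \eqref{taylor_cnab}--\eqref{c_trick} is needed and hence no $c^{-2\alpha}$ term appears, makes explicit precisely why the error here involves only the iterated commutator $\mathcal C^2$; this is implicit in the paper but not spelled out.
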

\begin{proof}
We define the following filter functions:
\begin{align*}
    &\mathcal{N}(s,s_1,v,w)=e^{-i\frac12s_1\Delta}\big( e^{i\frac12s_1\Delta} v \big)\big(  e^{i\frac12s_1\Delta}e^{-i\frac12s\Delta}[e^{isc\nab}w + e^{-isc\nab}\overline{w}] \big).
\end{align*}
Now, via a second order expansion of these two filter functions and a finite difference approximation of their first derivative with respect to $s_1$, we obtain the assertion.
\end{proof}

As for the integral \eqref{J2}, iterating Duhamel's formula yields
\begin{align*}
    J_2(u(t_n),\psi(t_n)) &=\int_0^{\tau}e^{-i\frac12s\Delta}\bigg( 
    \frac{i}{2} e^{i\frac12s\Delta}\int_0^{s}e^{-i\frac12\sigma \Delta}\big(e^{i\frac12 \sigma \Delta}\psi(t_n)\big)\big( e^{i \sigma c\nab}u(t_n) \big)d\sigma
    \bigg)\big( e^{isc\nab}u(t_n) \big)ds 
    \\&+\int_0^{\tau}e^{-i\frac12s\Delta}\bigg( 
    \frac{i}{2} e^{i\frac12s\Delta}\int_0^{s}e^{-i\frac12\sigma \Delta}\big(e^{i\frac12 \sigma \Delta}\psi(t_n)\big)\big( e^{i \sigma c\nab}u(t_n) \big)d\sigma
    \bigg)\big(e^{-isc\nab}\overline{u(t_n)} \big)ds
    \\&+\int_0^{\tau}e^{-i\frac12s\Delta}\bigg( 
    \frac{i}{2} e^{i\frac12s\Delta}\int_0^{s}e^{-i\frac12\sigma \Delta}\big(e^{i\frac12 \sigma \Delta}\psi(t_n)\big)\big( e^{-i \sigma c\nab}\overline{u(t_n)} \big)d\sigma
    \bigg)\big( e^{isc\nab}u(t_n) \big)ds
    \\&+\int_0^{\tau}e^{-i\frac12s\Delta}\bigg( 
    \frac{i}{2} e^{i\frac12s\Delta}\int_0^{s}e^{-i\frac12\sigma \Delta}\big(e^{i\frac12 \sigma \Delta}\psi(t_n)\big)\big( e^{-i \sigma c\nab}\overline{u(t_n)} \big)d\sigma
    \bigg)\big( e^{-isc\nab}\overline{u(t_n)} \big)ds \\
    &+ \mathcal{R}_4'
    \\&=J_{2,1}(u(t_n),\psi(t_n))+J_{2,2}(u(t_n),\psi(t_n))+J_{2,3}(u(t_n),\psi(t_n))+J_{2,4}(u(t_n),\psi(t_n)) \\
    &+ \mathcal{R}_4',
\end{align*}
where $\mathcal{R}_4'$ fulfills 
\begin{align}\label{R_4p}
    \|\mathcal{R}'_4\|_r \leq \tau^3  K\big(\sup_{0\leq \xi\leq \tau} \|u(t_n+\xi)\|_r,\sup_{0\leq \xi\leq \tau} \|\psi(t_n+\xi)\|_r\big).
\end{align}

We now handle the first term in $J_2$ in detail and the remaining three terms can be handled analogously. 
\begin{lemma}[Second order approximation of the integral \eqref{J21}]\label{lemma:J21}
For $0\leq\alpha\leq1$ it holds that
\begin{align*}
    J_{2,1}(w,v)&=\frac{\tau}{2c^2} \big[ vw\big( \varphi_1(i\tau(c^2+c\nab-\tfrac{1}{2}\Delta))-\varphi_1(i\tau(c\nab-\tfrac{1}{2}\Delta)) \big)w
    \big] \\
    &+ \mathcal{O}{(\tau^3( \Delta(vw)+c^{-2\alpha}\Delta^{1+\alpha}(vw)))}
    \\&=\tilde{J}_{2,1}(w,v) + \mathcal{O}{(\tau^3( \Delta(vw)+c^{-2\alpha}\Delta^{1+\alpha}(vw)))}.
\end{align*}
\end{lemma}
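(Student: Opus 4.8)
The plan is to mirror the approach of Lemmata \ref{Lemma1opsi}, \ref{Lemma2ou1} and \ref{Lemma2ou2}, adapted to the doubly nested structure of \eqref{J2}. After the reduction encoded by $\mathcal{R}'_4$ (see \eqref{R_4p}) it suffices to approximate, with $v=\psi(t_n)$ and $w=u(t_n)$,
$$
J_{2,1}(w,v)=\frac{i}{2}\int_0^\tau e^{-i\frac12 s\Delta}\Big[\big(e^{i\frac12 s\Delta}H(s)\big)\big(e^{isc\nab}w\big)\Big]ds,\qquad H(s)=\int_0^s e^{-i\frac12\sigma\Delta}\big[(e^{i\frac12\sigma\Delta}v)(e^{i\sigma c\nab}w)\big]d\sigma .
$$
Since $\|H(s)\|_r\leq s\,C\|v\|_r\|w\|_r$ by \eqref{cnab_liniso_bound} and the bilinear estimate, the whole quantity is $\mathcal{O}(\tau^2)$; hence only a \emph{first order} resolution of the slowly varying dynamics is needed to land on an $\mathcal{O}(\tau^3)$ remainder, and in contrast to Lemma \ref{Lemma2ou1} no second order (finite difference) expansion will be required.

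First I would treat the inner integral. In the spirit of \eqref{N} set
$$
\mathcal{M}(\sigma,\sigma_1)=e^{-i\frac12\sigma_1\Delta}\big[(e^{i\frac12\sigma_1\Delta}v)(e^{i\frac12\sigma_1\Delta}e^{-i\frac12\sigma\Delta}e^{i\sigma c\nab}w)\big],
$$
so that $\mathcal{M}(\sigma,\sigma)$ is the integrand of $H$, $\mathcal{M}(\sigma,0)=v\,e^{i\sigma(c\nab-\frac12\Delta)}w$ (using $e^{-i\frac12\sigma\Delta}e^{i\sigma c\nab}=e^{i\sigma(c\nab-\frac12\Delta)}$ on the Fourier side), and, exactly as in Lemma \ref{Lemma1opsi}, $\partial_{\sigma_1}\mathcal{M}(\sigma,\sigma_1)=e^{-i\frac12\sigma_1\Delta}\mathcal{C}[f_{\text{quad}}(\cdot,\cdot),i\tfrac{1}{2}\Delta](e^{i\frac12\sigma_1\Delta}v,\,e^{i\frac12\sigma_1\Delta}e^{-i\frac12\sigma\Delta}e^{i\sigma c\nab}w)$. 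A first order Taylor expansion of $\mathcal{M}(\sigma,\cdot)$ around $0$ together with Lemma \ref{lemma_comm} gives $H(s)=v\,s\varphi_1(is(c\nab-\frac12\Delta))w+\mathcal{O}(s^2\,\mathcal{C}[f_{\text{quad}}(\cdot,\cdot),i\tfrac{1}{2}\Delta](v,w))$; the decisive step is then to invoke \eqref{anastasiya}, which replaces the \emph{operator} $s\varphi_1(is(c\nab-\frac12\Delta))$ by the \emph{scalar} $s\varphi_1(isc^2)=\tfrac{1}{ic^2}(e^{isc^2}-1)$ at the price of $\mathcal{O}(s^2(\Delta+c^{-2\alpha}\Delta^{1+\alpha})w)$, so that
$$
H(s)=\tfrac{1}{ic^2}(e^{isc^2}-1)\,vw+\mathcal{R}(s),\qquad \|\mathcal{R}(s)\|_r\leq s^2 K\big(\Delta(vw)+c^{-2\alpha}\Delta^{1+\alpha}(vw)\big),
$$
where the commutator remainder, costing only one derivative, has been absorbed into $\Delta(vw)$ by Lemma \ref{lemma_comm}.

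Next I would insert this into the outer integral. The crucial observation is that $\tfrac{1}{ic^2}(e^{isc^2}-1)$ is a \emph{scalar}, so it commutes with $e^{i\frac12 s\Delta}$ and factors through the product, and the constant $\tfrac{i}{2}\cdot\tfrac{1}{ic^2}=\tfrac{1}{2c^2}$ can be pulled out of the time integral (the factor $e^{isc^2}-1$ cannot, as it still depends on $s$). Using $\|e^{isc\nab}w\|_r=\|w\|_r$, $\int_0^\tau s^2ds=\mathcal{O}(\tau^3)$ and $|e^{isc^2}-1|\leq s\,c^2$, the $\mathcal{R}(s)$-contribution is of the advertised size; and one further filtered function $\mathcal{L}(s,s_1)=e^{-i\frac12 s_1\Delta}[(e^{i\frac12 s_1\Delta}(vw))(e^{i\frac12 s_1\Delta}e^{-i\frac12 s\Delta}e^{isc\nab}w)]$, expanded to first order in $s_1$ (again with Lemma \ref{lemma_comm}), lets one replace $e^{-i\frac12 s\Delta}[(e^{i\frac12 s\Delta}(vw))(e^{isc\nab}w)]$ by $\mathcal{L}(s,0)=(vw)\,e^{is(c\nab-\frac12\Delta)}w$ up to $\mathcal{O}(\tau^3\Delta(vw))$. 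Pulling out the $s$-independent product $vw$ and using $e^{isc^2}e^{is(c\nab-\frac12\Delta)}=e^{is(c^2+c\nab-\frac12\Delta)}$ this yields
$$
J_{2,1}(w,v)=\frac{1}{2c^2}\,vw\int_0^\tau\big(e^{is(c^2+c\nab-\frac12\Delta)}-e^{is(c\nab-\frac12\Delta)}\big)w\,ds+\mathcal{O}\big(\tau^3(\Delta(vw)+c^{-2\alpha}\Delta^{1+\alpha}(vw))\big),
$$
and integrating exactly via \eqref{phi1} produces precisely $\tilde{J}_{2,1}(w,v)$.

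The step I expect to be the main obstacle is keeping straight which oscillations survive and which are frozen: the scalar phase $c^2$ and the operator combinations $c\nab-\frac12\Delta$ and $c^2+c\nab-\frac12\Delta$ carry the full $\mathcal{O}(c^2)$ oscillation and must be integrated exactly (so they appear in $\tilde{J}_{2,1}$), whereas the $\Delta$-propagators acting on $v$ and on $vw$ are the slowly varying parts to be Taylor expanded. The delicate ingredient is \eqref{anastasiya}: it is exactly the reduction of $\varphi_1(is(c\nab-\frac12\Delta))$ inside $H(s)$ to the scalar $\tfrac{1}{ic^2}(e^{isc^2}-1)$ that both produces the explicit $\tfrac{1}{2c^2}$ prefactor and, via the leftover factor $e^{isc^2}$, the shift $c\nab-\frac12\Delta\mapsto c^2+c\nab-\frac12\Delta$ in the first $\varphi_1$; one must check that this step — costing two derivatives on $w$, or $2(1+\alpha)$ with the gain $c^{-2\alpha}$ — together with the two commutator expansions, produces only the remainder $\mathcal{O}(\tau^3(\Delta(vw)+c^{-2\alpha}\Delta^{1+\alpha}(vw)))$ claimed.
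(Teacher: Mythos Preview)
Your proposal is correct and follows essentially the same route as the paper: first approximate the inner integral via Lemma~\ref{Lemma1opsi} and then invoke \eqref{anastasiya} to reduce $s\varphi_1(is(c\nab-\tfrac12\Delta))$ to the scalar $s\varphi_1(isc^2)$, after which a first order expansion of the filtered function you call $\mathcal{L}$ (the paper's $\mathcal{N}$) and exact integration yield $\tilde{J}_{2,1}$. Your write-up is in fact more explicit than the paper's terse argument; the only superfluous remark is the bound $|e^{isc^2}-1|\le sc^2$, which is not needed to control the $\mathcal{R}(s)$-contribution.
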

\begin{proof}
Proceeding as in the derivation of \eqref{Schro1} and using \eqref{anastasiya}, we obtain
\begin{align}\label{J21}
    J_{2,1}(w,v)&=\frac{i}{2}\int_0^{\tau}s\varphi_1(isc^2)e^{-i\frac12s\Delta}\big( 
    e^{i\frac12s\Delta} vw  \big)\big( e^{isc\nab}w\big)ds + \mathcal{R},
\end{align}
where $\mathcal{R}$ fulfills 
$$
\|\mathcal{R}\|_r \leq \tau^3 K\big( \Delta(vw) +c^{-2\alpha}\Delta^{1+\alpha}(vw)\big).
$$
Finally, the assertion follows with the definition of the filtered function
$$
\mathcal{N}(s,s_1,v,w) = e^{-i\frac12s_1\Delta}\big( 
    e^{i\frac12s_1\Delta} vw \big)\big( e^{i\frac12s_1\Delta} e^{-i\frac12s\Delta} e^{isc\nab}w \big),
$$
via a Taylor expansion up to order one.
\end{proof}

Analogously, we obtain that, for $0\leq\alpha\leq1$,
\begin{equation}\label{J22}
    \begin{aligned}
    J_{2,2}(w,v)&=\frac{\tau}{2c^2} \big[ vw\big( \varphi_1(i\tau(c^2-c\nab-\tfrac{1}{2}\Delta))-\varphi_1(-i\tau(c\nab+\tfrac{1}{2}\Delta)) \big)\overline{w}    \big] \\
    &+ \mathcal{O}{(\tau^3( \Delta(vw)+c^{-2\alpha}\Delta^{1+\alpha}(vw)))}
    \\&=\tilde{J}_{2,2}(w,v) + \mathcal{O}{(\tau^3( \Delta(vw)+c^{-2\alpha}\Delta^{1+\alpha}(vw)))},
    \end{aligned}
\end{equation}
\begin{equation}\label{J23}
    \begin{aligned}
    J_{2,3}(w,v)&=-\frac{\tau}{2c^2} \big[v\overline{w}\big( \varphi_1(i\tau(-c^2+c\nab-\tfrac{1}{2}\Delta))-\varphi_1(i\tau(c\nab-\tfrac{1}{2}\Delta)) \big)w    \big] \\
    &+ \mathcal{O}{(\tau^3(\mathcal{C}[f_{\text{quad}}(\cdot,\cdot),i\tfrac{1}{2}\Delta](v , w)+c^{-2\alpha}\Delta^{1+\alpha}(vw)))}
    \\&=\tilde{J}_{2,3}(w,v) + \mathcal{O}{(\tau^3(\mathcal{C}[f_{\text{quad}}(\cdot,\cdot),i\tfrac{1}{2}\Delta](v , w)+c^{-2\alpha}\Delta^{1+\alpha}(vw)))}
    \end{aligned}
\end{equation}
and
\begin{equation}\label{J24}
    \begin{aligned}
    J_{2,4}(w,v)&=-\frac{\tau}{2c^2} \big[ v\overline{w}\big( \varphi_1(-i\tau(c^2+c\nab+\tfrac{1}{2}\Delta))-\varphi_1(-i\tau(c\nab+\frac{1}{2}\Delta)) \big)\overline{w} \big] \\
    &+\mathcal{O}{(\tau^3(\mathcal{C}[f_{\text{quad}}(\cdot,\cdot),i\tfrac{1}{2}\Delta](v , w)+c^{-2\alpha}\Delta^{1+\alpha}(vw)))}
    \\&=\tilde{J}_{2,4}(w,v) + \mathcal{O}{(\tau^3(\mathcal{C}[f_{\text{quad}}(\cdot,\cdot),i\tfrac{1}{2}\Delta](v , w)+c^{-2\alpha}\Delta^{1+\alpha}(vw)))}.
    \end{aligned}
\end{equation}
Finally, we may approximate the last integral \eqref{J3} as follows. We iterate Duhamel's formula, obtaining
\begin{align*}
    J_3(u(t_n),\psi(t_n))&=\int_0^{\tau}e^{-i\frac12s\Delta}\big( e^{i\frac12s\Delta}\psi(t_n) \big)\bigg(  -ic\nab^{-1}e^{isc\nab}\int_0^se^{-i\sigma c\nab}\vert e^{i\frac12\sigma\Delta}\psi(t_n)\vert^2 d\sigma \bigg)ds 
    \\&+\int_0^{\tau}e^{-i\frac12s\Delta}\big( e^{i\frac12s\Delta}\psi(t_n)\big)\bigg(  ic\nab^{-1}e^{-isc\nab}\int_0^se^{i\sigma c\nab}\vert e^{i\frac12\sigma\Delta}\psi(t_n)\vert^2 d\sigma \bigg)ds + \mathcal{R}_4''
    \\&= J_{3,1}(u(t_n),\psi(t_n)) + J_{3,2}(u(t_n),\psi(t_n)) +\mathcal{R}_4'',
\end{align*}
where $\mathcal{R}_4''$ fulfills 
\begin{align}\label{R_4pp}
    \|\mathcal{R}_4''\|_r\leq \tau^3  K\big(\sup_{0\leq \xi\leq \tau} \|u(t_n+\xi)\|_r,\sup_{0\leq \xi\leq \tau} \|\psi(t_n+\xi)\|_r\big).
\end{align}

\begin{lemma}\label{lemma:J31}
For $0\leq\alpha\leq1$ it holds that
\begin{equation}
    \begin{aligned}
        J_{3,1}(w,v)&=\frac{\tau}{c^2}\big[ v\big( \varphi_1(i\tau(c\nab-c^2-\tfrac{1}{2}\Delta)) - \varphi_1(i\tau(c\nab-\tfrac{1}{2}\Delta)) \big)c\nab^{-1}\vert v\vert^2\big]+ \mathcal{O}{(\tau^3 w\Delta v)}
    \\&=\tilde{J}_{3,1}(w,v) + \mathcal{O}{(\tau^3 (\Delta v + c^{-2\alpha}\Delta^{1+\alpha}v ))}.
    \end{aligned}
\end{equation}
\end{lemma}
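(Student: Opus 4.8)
I would resolve the nested integral defining $J_{3,1}$ from the inside out, following the pattern of the proofs of Lemmata \ref{Lemma2ou1}--\ref{lemma:J21}. Recall that $J_{3,1}$ is the contribution of $\mathcal{I}_u(s)$ after iterating Duhamel's formula in the expansion of \eqref{J3}; it involves only $v=\psi(t_n)$ and reads
$$
J_{3,1}(w,v)=-i\int_0^{\tau}e^{-i\frac12 s\Delta}\Big[\big(e^{i\frac12 s\Delta}v\big)\Big(c\nab^{-1}e^{isc\nab}\int_0^s e^{-i\sigma c\nab}\big|e^{i\frac12\sigma\Delta}v\big|^2 d\sigma\Big)\Big]ds .
$$

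First I would treat the inner integral. Using \eqref{taylor_cnab} to replace $e^{-i\sigma c\nab}$ by $e^{-i\sigma c^2}e^{i\frac12\sigma\Delta}$ up to an error bounded as in \eqref{c_trick}, recognising $e^{i\frac12\sigma\Delta}|e^{i\frac12\sigma\Delta}v|^2=\mathcal N(\sigma,\sigma,\Delta,v)$ with $\mathcal N$ from \eqref{N}, freezing its slow part by a first order Taylor expansion in the filter variable (the product rule for $\Delta$, as in Lemma \ref{lemma_comm}, gives an $\mathcal O(\sigma\,\Delta v)$ defect), and finally replacing $\int_0^s e^{-i\sigma c^2}e^{i\sigma\Delta}d\sigma$ by the scalar $\int_0^s e^{-i\sigma c^2}d\sigma=s\,\varphi_1(-isc^2)$ at the cost of another such term, I obtain
$$
\int_0^s e^{-i\sigma c\nab}\big|e^{i\frac12\sigma\Delta}v\big|^2 d\sigma=s\,\varphi_1(-isc^2)\,|v|^2+\mathcal O\big(s^2(\Delta v+c^{-2\alpha}\Delta^{1+\alpha}v)\big).
$$
By $\|c\nab^{-1}\|_r\le 1$ and $\|e^{itc\nab}\|_r=\|e^{it\Delta}\|_r=1$ (see \eqref{cnab_liniso_bound}), inserting this into $J_{3,1}$ already accounts for the claimed $\mathcal O(\tau^3(\Delta v+c^{-2\alpha}\Delta^{1+\alpha}v))$ remainder.

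It remains to combine the surviving exact phase $e^{isc\nab}$ with the inner scalar and to integrate over $s$. The identity
$$
c\nab^{-1}e^{isc\nab}\,s\,\varphi_1(-isc^2)=c\nab^{-1}\,\frac{e^{isc\nab}-e^{is(c\nab-c^2)}}{ic^2}
$$
rewrites the integrand as $e^{-i\frac12 s\Delta}$ applied to $(e^{i\frac12 s\Delta}v)\,h(s)$, where $h(s):=c\nab^{-1}\tfrac{e^{isc\nab}-e^{is(c\nab-c^2)}}{ic^2}|v|^2$ satisfies $\|h(s)\|_r=\mathcal O(s)$ with a constant independent of $c$, since $s\,\varphi_1(-isc^2)=\tfrac{1-e^{-isc^2}}{ic^2}$ has $\mathcal O(s)$ operator norm (by $|\varphi_1(i\xi)|\le1$ for real $\xi$, cf. Lemma \ref{stability1}). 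Freezing the slow product $(e^{i\frac12 s\Delta}v)\,h(s)$ exactly as in Lemma \ref{lemma:J21}, via a filtered function and a first order Taylor expansion in its filter variable, leaves a $\mathcal C[f_{\text{quad}}(\cdot,\cdot),i\tfrac12\Delta]$-defect which, by Lemma \ref{lemma_comm} and the $\mathcal O(s)$-bound on $h(s)$, contributes $\mathcal O(\tau^3\Delta v)$ after the outer integration — it is precisely this $\mathcal O(s)$-smallness of $h(s)$ that turns the $c^{-2}$ prefactor into the asserted loss of derivative while keeping the order at $\tau^3$. Combining the twist $e^{-i\frac12 s\Delta}$ with the two phases in $h(s)$ and integrating over $[0,\tau]$ produces $\tau\varphi_1(i\tau(c\nab-\tfrac12\Delta))$ and $\tau\varphi_1(i\tau(c\nab-c^2-\tfrac12\Delta))$; commuting $c\nab^{-1}$ past these reproduces exactly $\tilde{J}_{3,1}(w,v)$, and collecting all remainders (with every commutator bounded by $\Delta v$ through Lemma \ref{lemma_comm}) gives the stated error.

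The main obstacle is the bookkeeping of the two nested oscillations: \eqref{taylor_cnab} must be applied only to the inner $e^{-i\sigma c\nab}$, and only for the error estimate, while the outer $e^{isc\nab}$ is kept untouched, so that the correct combinations $c\nab-\tfrac12\Delta$ and $c\nab-c^2-\tfrac12\Delta$ emerge inside the $\varphi_1$'s; and one must check that, despite the extra $c^{-2}$, the first order freezing step still yields a $\tau^3$-defect, which rests entirely on the $\mathcal O(s)$-smallness of $s\,\varphi_1(-isc^2)$.
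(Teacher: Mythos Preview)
Your proof is correct and follows essentially the same approach as the paper: approximate the inner integral by $s\varphi_1(-isc^2)|v|^2$ (the paper does this in two steps, first invoking Lemma~\ref{Lemma1ou} to obtain $s\overline{v}\varphi_1(is(\Delta-c^2))v$ and then reducing to the scalar via \eqref{anastasiya}, while you merge these into one argument), then introduce a filtered function for the outer product and freeze it by a first-order Taylor expansion, the $\mathcal O(s)$-smallness of the inner piece guaranteeing a $\tau^3$ remainder. Your explicit algebraic identity $s\varphi_1(-isc^2)e^{isc\nab}=\tfrac{1}{ic^2}\big(e^{isc\nab}-e^{is(c\nab-c^2)}\big)$ and the auxiliary function $h(s)$ are a convenient repackaging that the paper leaves implicit, but the two arguments are otherwise interchangeable.
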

\begin{proof}
As for the first term, proceeding as in the derivation of \eqref{KGo1} and arguing similarly as in \eqref{anastasiya}, we obtain
\begin{align*}
    J_{3,1}(w,v)&=\int_0^{\tau}e^{-i\frac12s\Delta}\big( e^{i\frac12s\Delta}v \big)\big( -i s c \nab^{-1} e^{i s c\nab} v \varphi_1(-i s (c^2+\Delta))   \overline{v} \big)ds + \mathcal{R}_1 \\
    &=\int_0^{\tau}e^{-i\frac12s\Delta}\big( e^{i\frac12s\Delta}v \big)\big( -i s c \nab^{-1} e^{i s c\nab} v \varphi_1(-i s c^2)   \overline{v} \big)ds + \mathcal{R}_1+ \mathcal{R}_2,
\end{align*}
with
$$
\|\mathcal{R}_1\|_r\leq \tau^3 K\big( \mathcal{C}[f_{\text{quad}}(\cdot,\cdot),\Delta](v,v)+c^{-2\alpha}\Delta^{1+\alpha}v \big),
$$
by Lemma \ref{Lemma1ou} and
$$
\|\mathcal{R}_2\|_r\leq \tau^3 K\big( \Delta v \big),
$$
by \eqref{c_trick}. We define
$$
\mathcal{N}(s,s_1,v)=e^{-i\frac12s_1\Delta}\big(e^{i\frac12s_1\Delta}v\big)\big(e^{i\frac12s_1\Delta}e^{-i\frac12s\Delta}e^{isc\nab}\overline{v}\big)
$$
and obtain the assertion similarly as in Lemma \eqref{lemma:J21}, via a first order Taylor expansion of $\mathcal{N}(s,s_1,v)$ at $s_1=0$.
\end{proof}

Analogously we obtain the second term 
\begin{equation}\label{J32}
    \begin{aligned}
    J_{3,2}(w,v)&=\frac{\tau}{c^2} \big[ v\big( \varphi_1(i\tau(-c\nab+c^2-\tfrac{1}{2}\Delta)) - \varphi_1(-i\tau(c\nab+\tfrac{1}{2}\Delta)) \big)c\nab^{-1}\vert v\vert^2\big]+ \mathcal{O}{(\tau^3 w\Delta v)}
    \\&=\tilde{J}_{3,2}(w,v) + \mathcal{O}{(\tau^3 w\Delta v)}.
    \end{aligned}
\end{equation}
Collecting our results, namely Lemma \ref{lemma:J1}, Lemma \ref{lemma:J21}, \eqref{J22}, \eqref{J23}, \eqref{J24}, Lemma \ref{lemma:J31} and \eqref{J32}, leads to the following Corollary, where we use in addition the bound found in Lemma \ref{lemma_comm}.
\begin{cor}\label{cor:I_psi}
For $0\leq\alpha\leq1$ it holds that
 \begin{align*}
      \mathfrak{I}_{\psi}(w,v) 
     &= \tilde{J}_1(w,v) + \tilde{J}_{2,1}(w,v) + \tilde{J}_{2,2}(w,v)+\tilde{J}_{2,3}(w,v) +\tilde{J}_{2,4}(w,v)+\tilde{J}_{3,1}(w,v)+\tilde{J}_{3,2}(w,v)\\
     &+\mathcal{O}{(\tau^3(\Delta w v +c^{-2\alpha}\Delta^{1+\alpha}vw))}\\
     &= \Tilde{\mathfrak{I}}_{\psi}(w,v) + \mathcal{O}{(\tau^3(\Delta w v +c^{-2\alpha}\Delta^{1+\alpha}vw))}.
 \end{align*}
\end{cor}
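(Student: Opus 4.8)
The statement is obtained by assembling the second-order approximations of the three highly oscillatory integrals $J_1$, $J_2$, $J_3$ that constitute $\mathfrak{I}_{\psi}(w,v)$ in \eqref{J1}--\eqref{J3}, and then reorganising the heterogeneous error shapes produced in the process into the two advertised forms $\tau^3\,\Delta w\, v$ and $\tau^3 c^{-2\alpha}\Delta^{1+\alpha}vw$ by means of Lemma \ref{lemma_comm} and the bilinear estimate $\|fg\|_r\le C_{r,d}\|f\|_r\|g\|_r$. No new computation is needed beyond what the preceding lemmas already supply; the task is a careful tracking of constants and derivative counts.

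\textbf{The three integrals.} For the first integral \eqref{J1} I would invoke Lemma \ref{lemma:J1} directly, giving $J_1(w,v)=\tilde J_1(w,v)+\mathcal{O}\big(\tau^3\mathcal{C}^2[f_{\text{quad}}(\cdot,\cdot),i\tfrac12\Delta](v,w)\big)$. For the second integral \eqref{J2}, iterating Duhamel's formula in its inner factor $\mathcal{I}_{\psi}$ splits it as $J_2=J_{2,1}+J_{2,2}+J_{2,3}+J_{2,4}+\mathcal{R}_4'$, where $\|\mathcal{R}_4'\|_r\le\tau^3 K$ by \eqref{R_4p}; here the $c$-uniformity of the remainder rests on the isometry bounds \eqref{cnab_liniso_bound} applied to the trilinear leftover. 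Substituting Lemma \ref{lemma:J21} and \eqref{J22}, \eqref{J23}, \eqref{J24} replaces each $J_{2,i}$ by a main part $\tilde J_{2,i}$ plus an error of the form $\tau^3\big(\Delta(vw)+c^{-2\alpha}\Delta^{1+\alpha}(vw)\big)$ or $\tau^3\big(\mathcal{C}[f_{\text{quad}}(\cdot,\cdot),i\tfrac12\Delta](v,w)+c^{-2\alpha}\Delta^{1+\alpha}(vw)\big)$. For the third integral \eqref{J3}, iterating Duhamel in its inner factor $\mathcal{I}_u$ gives $J_3=J_{3,1}+J_{3,2}+\mathcal{R}_4''$ with $\|\mathcal{R}_4''\|_r\le\tau^3 K$ by \eqref{R_4pp}; substituting Lemma \ref{lemma:J31} and \eqref{J32} yields main parts $\tilde J_{3,1},\tilde J_{3,2}$ with errors $\tau^3\big(\Delta v+c^{-2\alpha}\Delta^{1+\alpha}v\big)$, respectively $\tau^3 w\Delta v$.

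\textbf{Collecting.} By definition $\tilde{\mathfrak{I}}_{\psi}(w,v):=\tilde J_1+\tilde J_{2,1}+\tilde J_{2,2}+\tilde J_{2,3}+\tilde J_{2,4}+\tilde J_{3,1}+\tilde J_{3,2}$, so $\mathfrak{I}_{\psi}(w,v)-\tilde{\mathfrak{I}}_{\psi}(w,v)$ is exactly the finite sum of all the error terms above. Every error without a factor $c^{-2\alpha}$ is of the shape $\tau^3\mathcal{C}^2[f_{\text{quad}},i\tfrac12\Delta](v,w)$, $\tau^3\mathcal{C}[f_{\text{quad}},i\tfrac12\Delta](v,w)$, $\tau^3\Delta(vw)$, $\tau^3\Delta v$, or $\tau^3 w\Delta v$; by Lemma \ref{lemma_comm} (one lost derivative for $\mathcal{C}$, two for $\mathcal{C}^2$) together with the bilinear estimate, each is $H^r$-controlled by a $c$-independent constant times the $H^r$ norm of $(\Delta w)\,v$ up to the evident symmetrisation in $v,w$, i.e. a loss of at most two derivatives and no power of $c$. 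Every error carrying $c^{-2\alpha}$ is of the shape $\tau^3 c^{-2\alpha}\Delta^{1+\alpha}(vw)$ or $\tau^3 c^{-2\alpha}\Delta^{1+\alpha}v$, hence controlled in $H^r$ by $c^{-2\alpha}$ times the $H^r$ norm of $\Delta^{1+\alpha}(vw)$, a loss of at most $2(1+\alpha)$ derivatives with the gain $c^{-2\alpha}$. Summing the finitely many contributions yields $\mathfrak{I}_{\psi}(w,v)=\tilde{\mathfrak{I}}_{\psi}(w,v)+\mathcal{O}\big(\tau^3(\Delta w\, v+c^{-2\alpha}\Delta^{1+\alpha}vw)\big)$ with constants independent of $c$.

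\textbf{Main obstacle.} There is no single deep step, since the preceding lemmas carry the analytic weight; the delicate part is the bookkeeping of $c$-uniformity and derivative loss. One must verify that each remainder produced by the iterated Duhamel expansions ($\mathcal{R}_4'$, $\mathcal{R}_4''$ and the $\mathcal{R}_4$ of \eqref{R_4}) is genuinely $\mathcal{O}(\tau^3)$ with a $c$-independent constant---this relies on \eqref{cnab_liniso_bound} and on the polynomial structure of the nonlinearity---that no term silently forfeits the $c^{-2\alpha}$ gain inherited from \eqref{taylor_cnab}--\eqref{c_trick}, and that the $\varphi_1$- and $\Psi_2$-factors appearing in the $\tilde J$'s, evaluated at purely imaginary arguments such as $i\tau(c\nab\mp\tfrac12\Delta)$, remain uniformly bounded (so that they add neither derivatives nor powers of $c$), which is the point where \eqref{stability_trick} and the estimate $|\varphi_1(i\xi)|\le1$ are used.
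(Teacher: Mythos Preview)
Your proposal is correct and follows essentially the same approach as the paper: the paper's proof is nothing more than the one-line remark ``Collecting our results, namely Lemma~\ref{lemma:J1}, Lemma~\ref{lemma:J21}, \eqref{J22}, \eqref{J23}, \eqref{J24}, Lemma~\ref{lemma:J31} and \eqref{J32}, leads to the following Corollary, where we use in addition the bound found in Lemma~\ref{lemma_comm}'', and your write-up is a faithful, more detailed execution of exactly that plan. One small overreach: the remainder $\mathcal{R}_4$ of \eqref{R_4} sits \emph{outside} $\mathfrak{I}_\psi$ in the expansion of $\psi(t_n+\tau)$ and therefore plays no role in this corollary; only $\mathcal{R}_4'$ and $\mathcal{R}_4''$ enter here.
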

These considerations collected in Corollary \ref{cor:I_psi} lead to the second order uniformly accurate low regularity integrator:
\begin{equation}\label{Schro2}
    \begin{aligned}
    \psi^{n+1} &= e^{i\frac12\tau\Delta}\psi^n + \frac{i}{2}e^{i\frac12\tau\Delta}\tilde{\mathfrak{I}}_{\psi}(u^n,\psi^n).
    \end{aligned}
\end{equation}

In the sections that follow we aim to carry out the error analysis of the scheme in $(u^{n},\psi^{n})$ given by \eqref{KGo2} and \eqref{Schro2}. We recall that we denote by $\varphi^t_{K}$, $\varphi^t_{S}$ the exact flows of \eqref{KG} and \eqref{Schr} respectively and we let $\tilde{\Phi}^t_{K}$, $\tilde{\Phi}^t_{S}$ be the numerical flows corresponding to \eqref{KGo2} and \eqref{Schro2} respectively, such that in particular it holds
$$
u^{n+1}=\tilde{\Phi}^{\tau}_{K}(u^n,\psi^n),\quad \psi^{n+1}=\tilde{\Phi}^{\tau}_{S}(u^n,\psi^n).
$$
\subsection{Local error analysis}
\begin{lemma}
\label{localerror2}
Fix $r>\frac{d}{2}$. The local error given by the differences $\varphi^{\tau}_K(u(t_n),\psi(t_n))-\tilde{\Phi}^{\tau}_K(u(t_n),\psi(t_n))$ and $\varphi^{\tau}_S(u(t_n),\psi(t_n))-\tilde{\Phi}^{\tau}_S(u(t_n),\psi(t_n))$ satisfies
$$
\varphi^{\tau}_K(u(t_n),\psi(t_n))-\tilde{\Phi}^{\tau}_K(u(t_n),\psi(t_n)) = \mathcal{O}\big(\tau^3(\Delta u(t_n){\psi(t_n)}+ c^{-2\alpha}\Delta^{1+\alpha}u(t_n)\psi(t_n) + c^{-4\alpha}\Delta^{2+2\alpha}\partial_x\psi(t_n))\big)
$$
and
$$
\varphi^{\tau}_S(u(t_n),\psi(t_n))-\tilde{\Phi}^{\tau}_S(u(t_n),\psi(t_n)) = \mathcal{O}\big(\tau^3(\Delta u(t_n){\psi(t_n)} + c^{-2\alpha}\Delta^{1+\alpha}u(t_n)\psi(t_n) )\big),
$$
where $0\leq\alpha\leq1$.
\end{lemma}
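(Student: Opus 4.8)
The plan is to subtract the numerical schemes \eqref{KGo2} and \eqref{Schro2}, evaluated at the exact data $(u(t_n),\psi(t_n))$, from the twice‑iterated Duhamel expansions \eqref{2o_2} and its $\psi$‑counterpart, and then to feed in the integral approximations of Corollaries \ref{cor:I_u} and \ref{cor:I_psi} together with the commutator bounds of Lemma \ref{lemma_comm}. The guiding observation is that in both components the free‑flow term cancels exactly, so that the local error is, up to the already $\mathcal{O}(\tau^3)$ remainders produced by the Duhamel iterations, nothing but the mismatch between the exact oscillatory integral and the quadrature built into the scheme.

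For the Klein--Gordon part, \eqref{2o_2} gives $\varphi^{\tau}_K(u(t_n),\psi(t_n)) = e^{i\tau c\nab}u(t_n) - ic\nab^{-1}e^{i\tau c\nab}\mathfrak{I}_u(u(t_n),\psi(t_n)) + \mathcal{R}_3$, whereas \eqref{KGo2} reads $\tilde{\Phi}^{\tau}_K(u(t_n),\psi(t_n)) = e^{i\tau c\nab}u(t_n) - ic\nab^{-1}e^{i\tau c\nab}\tilde{\mathfrak{I}}_u(u(t_n),\psi(t_n))$. Subtracting, the local error equals $-ic\nab^{-1}e^{i\tau c\nab}\big(\mathfrak{I}_u - \tilde{\mathfrak{I}}_u\big)(u(t_n),\psi(t_n)) + \mathcal{R}_3$. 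I would then use \eqref{cnab_liniso_bound} to discard the unitary factor $e^{i\tau c\nab}$ and the $H^r$‑bounded factor $c\nab^{-1}$, insert Corollary \ref{cor:I_u}, bound $\|\mathcal{R}_3\|_r$ by \eqref{2o_r1}, and finally translate the commutator‑type quantities $\mathcal{C}[f_{\text{quad}}(\cdot,\cdot),i\Delta]$ and $\mathcal{C}^2[f_{\text{quad}}(\cdot,\cdot),i\Delta]$ occurring in the corollary into Sobolev norms via Lemma \ref{lemma_comm}; this is what produces the $\Delta u(t_n)\psi(t_n)$ contribution, while the $c^{-2\alpha}$‑ and $c^{-4\alpha}$‑weighted pieces come straight from the fractional Taylor expansion \eqref{taylor_cnab} through \eqref{c_trick}.

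For the Schr\"odinger part the argument is structurally identical: the $\psi$‑expansion reads $\varphi^{\tau}_S(u(t_n),\psi(t_n)) = e^{i\frac12\tau\Delta}\psi(t_n) + \tfrac{i}{2} e^{i\frac12\tau\Delta}\mathfrak{I}_{\psi}(u(t_n),\psi(t_n)) + \mathcal{R}_4$, the scheme \eqref{Schro2} reads $\tilde{\Phi}^{\tau}_S(u(t_n),\psi(t_n)) = e^{i\frac12\tau\Delta}\psi(t_n) + \tfrac{i}{2} e^{i\frac12\tau\Delta}\tilde{\mathfrak{I}}_{\psi}(u(t_n),\psi(t_n))$, the free flows cancel, and one is left with $\tfrac{i}{2} e^{i\frac12\tau\Delta}\big(\mathfrak{I}_{\psi} - \tilde{\mathfrak{I}}_{\psi}\big)(u(t_n),\psi(t_n)) + \mathcal{R}_4$. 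Using $\|e^{i\frac12\tau\Delta}\|_r = 1$, Corollary \ref{cor:I_psi}, Lemma \ref{lemma_comm}, and the $\tau^3$ bounds \eqref{R_4}, \eqref{R_4p}, \eqref{R_4pp} on the accumulated remainders $\mathcal{R}_4,\mathcal{R}_4',\mathcal{R}_4''$ then yields the second assertion; only the milder loss $\Delta u(t_n)\psi(t_n) + c^{-2\alpha}\Delta^{1+\alpha}u(t_n)\psi(t_n)$ appears here, because $\mathfrak{I}_{\psi}$ is multiplied by the harmless $\tfrac{i}{2}e^{i\frac12\tau\Delta}$ rather than by $c\nab^{-1}$, so the $I_1$‑type term responsible for the $c^{-4\alpha}$‑contribution in the Klein--Gordon estimate has no counterpart.

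The one genuinely delicate point — and the step I would treat most carefully — is the bookkeeping of derivative orders. One must verify that each error term produced in Corollaries \ref{cor:I_u} and \ref{cor:I_psi}, once the commutators are expanded through Lemma \ref{lemma_comm} and the fractional Taylor remainder \eqref{c_trick} is applied, is indeed dominated by one of the stated combinations ($\Delta u(t_n)\psi(t_n)$, $c^{-2\alpha}\Delta^{1+\alpha}u(t_n)\psi(t_n)$, and, for the Klein--Gordon component, $c^{-4\alpha}\Delta^{2+2\alpha}\partial_x\psi(t_n)$, the latter arising from $\mathcal{C}[f_{\text{quad}}(\cdot,\cdot),i\Delta](\psi(t_n),\overline{\psi(t_n)})\sim\nabla\psi(t_n)\cdot\nabla\overline{\psi(t_n)}$ weighted by $c^{-4\alpha}\Delta^{2+2\alpha}$), and in particular that the operator $c\nab^{-1}$, being bounded by $1$ on $H^r$ by \eqref{cnab_liniso_bound}, generates no hidden positive power of $c$. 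Everything else is a direct substitution of the already established estimates, together with the bilinear $H^r$ estimate for $r>\tfrac{d}{2}$.
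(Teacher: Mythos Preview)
Your proposal is correct and follows essentially the same approach as the paper: the paper's own proof simply cites Corollary~\ref{cor:I_u} together with \eqref{2o_r1} for the Klein--Gordon component, and Corollary~\ref{cor:I_psi} together with \eqref{R_4}, \eqref{R_4p}, \eqref{R_4pp} for the Schr\"odinger component, which is exactly the subtraction-and-substitution you describe. One small remark: your explanation for the absence of the $c^{-4\alpha}$ term in the Schr\"odinger estimate is slightly misplaced --- it is not the prefactor $c\nab^{-1}$ versus $e^{i\frac12\tau\Delta}$ that matters, but rather that the integrand of $J_1$ (Lemma~\ref{lemma:J1}) does not carry an overall factor $e^{-isc\nab}$ and hence never invokes the second-order use of \eqref{taylor_cnab}, whereas $I_1$ (Lemma~\ref{Lemma2ou1}) does; this is a cosmetic point and does not affect the validity of your argument.
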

\begin{proof}
This assertion follows by Corollary \ref{cor:I_u}, \eqref{2o_r1} and Corollary \ref{cor:I_psi}, \eqref{R_4}, \eqref{R_4p} and \eqref{R_4pp} respectively.
\end{proof}
\subsection{Stability analysis}
\begin{lemma}
\label{stability2}
Fix $r>\frac{d}{2}$. The numerical flows $\tilde{\Phi}^{\tau}_K$ and $\tilde{\Phi}^{\tau}_S$ defined by \eqref{KGo2} and \eqref{Schro2} respectively are stable in $H^r$, namely it holds for any $v_i,w_i\in H^r$, $i\in\{1,2\}$ that
$$
\|\tilde{\Phi}^{\tau}_K(v_1,w_1)-\tilde{\Phi}^{\tau}_K(v_2,w_2)\|_r\leq \|v_1-v_2\|_r + \tau M_1 (\|v_1-v_2\|_r +\|w_1-w_2\|_r),
$$
$$
\|\tilde{\Phi}^{\tau}_S(v_1,w_1)-\tilde{\Phi}^{\tau}_S(v_2,w_2)\|_r\leq \|w_1-w_2\|_r + \tau M_2 (\|v_1-v_2\|_r +\|w_1-w_2\|_r),
$$
where $M_1$ and $M_2$ can be chosen independently of $c$.
\end{lemma}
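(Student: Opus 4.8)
The argument will mirror that of Lemma \ref{stability1}, the only genuinely new point being the treatment of the composite operator symbols that appear at second order. Collecting $\tilde{\mathfrak{I}}_u=\tilde I_1+\tilde I_2+\tilde I_3$ from Lemmata \ref{Lemma2ou1}, \ref{Lemma2ou2} and \eqref{2o_i3_final}, and $\tilde{\mathfrak{I}}_\psi$ from Lemmata \ref{lemma:J1}, \ref{lemma:J21} and \eqref{J22}--\eqref{J32}, every term is $\tau$ times a fixed bilinear or trilinear expression in its two arguments, in which the arguments are acted on only by operators of three types: (i) the unitary propagators $e^{itc\nab}$, $e^{it\Delta}$ and the contraction $c\nab^{-1}$, covered by \eqref{cnab_liniso_bound}; (ii) symbols $\varphi_1(i\tau\omega)$ with $\omega$ one of the real Fourier multipliers $\tfrac12\Delta-c^2$, $c\nab\pm\tfrac12\Delta$, $c\nab-c^2+\tfrac12\Delta$ and their shifts by $\pm c^2$; (iii) the composite symbols $\tau^2\varphi_1(i\tau\omega)\omega$ and $\Psi_2(i\tau\omega)$ with the same $\omega$'s. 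Hence the first step is to record the uniform $H^r$ bounds $\|\varphi_1(i\tau\omega)\|_r\le1$ (from $|\varphi_1(i\xi)|\le1$, $\xi\in\mathbb R$), $\|\tau^2\varphi_1(i\tau\omega)\omega\|_r\le 2\tau$ (from the exact identity $\xi\varphi_1(\xi)=e^\xi-1$, which gives $\tau\varphi_1(i\tau\omega)\omega=\tfrac{1}{i}(e^{i\tau\omega}-1)$, of norm $\le2$ since $e^{i\tau\omega}$ is unitary), and $\|\Psi_2(i\tau\omega)\|_r\le C_{\Psi_2}$, where $C_{\Psi_2}<\infty$ because, by \eqref{psi2}, $\Psi_2$ is entire and $|\Psi_2(i\eta)|\le 2/|\eta|$ for $|\eta|\ge1$. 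Crucially all three bounds are independent of both $c$ and $\tau$.

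With these at hand, I would split $\tilde\Phi^\tau_K(v_1,w_1)-\tilde\Phi^\tau_K(v_2,w_2)$ into the linear contribution $e^{i\tau c\nab}(v_1-v_2)$, whose $H^r$ norm equals $\|v_1-v_2\|_r$ by \eqref{cnab_liniso_bound}, and the nonlinear contribution $-ic\nab^{-1}e^{i\tau c\nab}\big(\tilde{\mathfrak{I}}_u(v_1,w_1)-\tilde{\mathfrak{I}}_u(v_2,w_2)\big)$, whose norm is at most $\|\tilde{\mathfrak{I}}_u(v_1,w_1)-\tilde{\mathfrak{I}}_u(v_2,w_2)\|_r$ since $\|c\nab^{-1}e^{i\tau c\nab}\|_r\le1$. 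On the latter I would telescope in each argument: for a typical trilinear summand, $v_1w_1w_1-v_2w_2w_2=(v_1-v_2)w_1w_1+v_2(w_1-w_2)w_1+v_2w_2(w_1-w_2)$, and then estimate each factor with the $H^r$ bilinear estimate together with the operator bounds (i)--(iii); the prefactors $\tfrac{\tau}{c^2}$ occurring in $\tilde I_2,\tilde I_3,\tilde J_{2,j},\tilde J_{3,j}$ only help, as each multiplies a difference of two $\varphi_1$'s of size $\le2$. This yields $\|\tilde\Phi^\tau_K(v_1,w_1)-\tilde\Phi^\tau_K(v_2,w_2)\|_r\le\|v_1-v_2\|_r+\tau M_1(\|v_1-v_2\|_r+\|w_1-w_2\|_r)$ with $M_1$ a polynomial in $\|v_i\|_r,\|w_i\|_r$, $C_{r,d}$, $C_{\Psi_2}$, $K_1,K_2$ — none involving $c$. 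The bound for $\tilde\Phi^\tau_S$ is obtained identically, using $\|e^{i\frac12\tau\Delta}\|_r=1$ for the linear part and applying the same reduction to $\tilde{\mathfrak{I}}_\psi$.

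The step I expect to be the main obstacle is precisely point (iii): showing that $\tau^2\varphi_1(i\tau\omega)\omega$ and $\Psi_2(i\tau\omega)$ — in which an \emph{unbounded} differential operator $\omega$ sits inside an entire function of $\tau\omega$ — define operators bounded on $H^r$ uniformly in $c$. This is where the special structure of the $\varphi$-functions from \eqref{phi1}, \eqref{psi2} is indispensable: the exact cancellation $\xi\varphi_1(\xi)=e^\xi-1$ and the decay $|\Psi_2(i\eta)|=\mathcal O(|\eta|^{-1})$ are exactly what permitted the introduction of these factors in the construction of \eqref{KGo2}--\eqref{Schro2} without losing $c$-uniformity. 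Once this boundedness is settled, the remaining work — telescoping the numerous but individually elementary bilinear and trilinear summands of $\tilde{\mathfrak{I}}_u$ and $\tilde{\mathfrak{I}}_\psi$ — is routine bookkeeping.
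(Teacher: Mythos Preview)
Your proposal is correct and follows the same approach as the paper, which simply appeals to \eqref{cnab_liniso_bound} together with $|\varphi_1(i\xi)|\le1$. In fact you go further than the paper's two-line proof by explicitly isolating and bounding the second-order ingredients $\Psi_2(i\tau\omega)$ and $\tau^2\varphi_1(i\tau\omega)\omega$ via \eqref{psi2} and the identity $\xi\varphi_1(\xi)=e^{\xi}-1$; these are precisely the extra multiplier bounds the paper leaves implicit, and your treatment of them is accurate.
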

\begin{proof}
This claim follows by \eqref{cnab_liniso_bound}. In addition, we use the fact that it holds $\vert \varphi_1 (i\xi)\vert\leq 1$ for all $\xi\in\mathbb{R}$.
\end{proof}
\subsection{Global error}
\begin{theorem}\label{thm:2}
Fix $r>\frac{d}{2}$ and assume that the solution $(u,\psi)$ of \eqref{KG}-\eqref{Schr} satisfies $u\in\mathcal{C}([0,T],H^{r+2+2\alpha})$ and $\psi\in\mathcal{C}([0,T],H^{r+5+4\alpha})$, $0\leq\alpha\leq1$. Then there exists a $\tau_0>0$ such that for all $0<\tau\leq\tau_0$ the following estimate holds for $(u^n,\psi^n)$ defined in \eqref{KGo2} and \eqref{Schro2}
\begin{align*}
    \|u(t_n)-u^n\|_r  &\leq \tau^2 K\big(\sup_{t_n\leq t\leq t_{n+1}} \|u(t)\|_{r+2},\sup_{t_n\leq t\leq t_{n+1}} \|\psi(t)\|_{r+2}\big)\\
    & +  \tau^2 c^{-2\alpha} K\big(\sup_{t_n\leq t\leq t_{n+1}} \|u(t)\|_{r+2\alpha+2},\sup_{t_n\leq t\leq t_{n+1}} \|\psi(t)\|_{r+2\alpha+2}\big)\\
    &+\tau^2 c^{-4\alpha} K\big(\sup_{t_n\leq t\leq t_{n+1}} \|\psi(t)\|_{r+4\alpha+5}\big),
\end{align*}
and
\begin{align*}
    \|\psi(t_n)-\psi^n\|_r &\leq \tau^2 K\big(\sup_{t_n\leq t\leq t_{n+1}} \|u(t)\|_{r+2},\sup_{t_n\leq t\leq t_{n+1}} \|\psi(t)\|_{r+2}\big)\\
    &+\tau^2 c^{-2\alpha}K\big(\sup_{t_n\leq t\leq t_{n+1}} \|u(t)\|_{r+2+2\alpha},\sup_{t_n\leq t\leq t_{n+1}} \|\psi(t)\|_{r+2+2\alpha}\big)
\end{align*}
where $0\leq\alpha\leq1$ and, in particular, $K$ can be chosen independently of $c$.
\end{theorem}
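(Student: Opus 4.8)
The plan is to follow the standard \emph{Lady Windermere's fan} telescoping argument, exactly as in the first order case (Theorem \ref{thm:1}), but now feeding in the cubic local error bounds from Lemma \ref{localerror2} together with the stability estimates from Lemma \ref{stability2}. Write $e^n_K = u(t_n)-u^n$ and $e^n_S = \psi(t_n)-\psi^n$, and decompose the one-step error in the familiar way:
\begin{align*}
u(t_{n+1}) - u^{n+1} &= \big(\varphi^\tau_K(u(t_n),\psi(t_n)) - \tilde{\Phi}^\tau_K(u(t_n),\psi(t_n))\big) + \big(\tilde{\Phi}^\tau_K(u(t_n),\psi(t_n)) - \tilde{\Phi}^\tau_K(u^n,\psi^n)\big),
\end{align*}
and analogously for $\psi$. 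The first bracket is the local error, controlled by Lemma \ref{localerror2}; the second bracket is handled by the stability Lemma \ref{stability2}, which contracts the accumulated error with a factor $1+\tau M$.

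Concretely, I would first observe that the exact solution stays in a fixed ball of the relevant high-order Sobolev space on $[0,T]$, so that all the constants $K$ appearing in Lemma \ref{localerror2} (and the norms $\|u\|_{r+2+2\alpha}$, $\|\psi\|_{r+5+4\alpha}$ etc.) are bounded uniformly in $n$ and, crucially, in $c$. Then, applying Lemma \ref{stability2} repeatedly, the numerical flow only loses a factor $(1+\tau M_i)$ per step, which accumulates to $e^{MT}$ over $n \le T/\tau$ steps. Summing the $n$ local errors, each of size $\mathcal{O}(\tau^3)$ times a (bounded) high-Sobolev seminorm of the exact solution, and multiplying by the Gronwall factor, gives a global error of size $\tau^2$ times those seminorms — with the three-tier structure (the $c$-independent $\tau^2$ term, the $\tau^2 c^{-2\alpha}$ term, and the $\tau^2 c^{-4\alpha}$ term for the Klein--Gordon component) inherited directly from the three-tier structure of the local error in Lemma \ref{localerror2}. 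A standard bootstrap/induction argument fixes $\tau_0$ small enough that $u^n,\psi^n$ remain in a slightly enlarged ball where the stability estimate is valid, closing the loop.

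One technical point that needs a little care is that the system is \emph{coupled}: the stability estimates in Lemma \ref{stability2} bound $\|\tilde{\Phi}^\tau_K(v_1,w_1)-\tilde{\Phi}^\tau_K(v_2,w_2)\|_r$ by $\|v_1-v_2\|_r + \tau M_1(\|v_1-v_2\|_r+\|w_1-w_2\|_r)$, so the errors $e^n_K$ and $e^n_S$ feed into each other. The clean way to handle this is to run the Gronwall argument on the sum $\mathcal{E}^n := \|e^n_K\|_r + \|e^n_S\|_r$: adding the two one-step inequalities gives $\mathcal{E}^{n+1} \le (1+\tau M)\mathcal{E}^n + (\text{local error})_n$ with $M = M_1+M_2$ (up to constants), and the discrete Gronwall lemma then yields $\mathcal{E}^n \le e^{MT}\sum_{k<n}(\text{local error})_k \le C e^{MT} T \tau^2 (\cdots)$. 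One then reads off the individual bounds on $\|e^n_K\|_r$ and $\|e^n_S\|_r$ by keeping track of which local-error contributions came from $\tilde{\Phi}_K$ versus $\tilde{\Phi}_S$ — in particular the $c^{-4\alpha}\Delta^{2+2\alpha}\partial_x\psi$ term only enters the Klein--Gordon part, which is why $\psi$ needs $H^{r+5+4\alpha}$ regularity but the $\psi$-error bound itself carries only up to $c^{-2\alpha}$.

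I do not expect any serious obstacle here: all the hard analysis — the oscillatory integral expansions, the commutator bounds avoiding powers of $c$, and the delicate $c^{-2\alpha}/c^{-4\alpha}$ bookkeeping via \eqref{taylor_cnab} and \eqref{c_trick} — has already been done in Lemmata \ref{Lemma2ou1}, \ref{Lemma2ou2}, \ref{lemma:J1}, \ref{lemma:J21}, \ref{lemma:J31} and collected in Corollaries \ref{cor:I_u} and \ref{cor:I_psi} and hence in Lemma \ref{localerror2}. The only mild subtlety is the combinatorial matching of the three error tiers across the coupled Gronwall estimate, plus the routine bootstrap to keep the numerical iterates in the region where Lemma \ref{stability2} applies; neither is more than bookkeeping. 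Accordingly the proof is short: invoke Lemma \ref{localerror2}, invoke Lemma \ref{stability2}, apply discrete Gronwall to $\mathcal{E}^n$, and split the resulting bound into the stated tiers.
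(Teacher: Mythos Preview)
Your proposal is correct and follows essentially the same approach as the paper: the paper's proof simply invokes a Lady Windermere's fan argument, plugging in the local error bounds of Lemma \ref{localerror2} and the stability estimates of Lemma \ref{stability2}, together with the commutator regularity estimates of Lemma \ref{lemma_comm}. Your more detailed write-up (telescoping decomposition, coupled Gronwall on $\mathcal{E}^n=\|e^n_K\|_r+\|e^n_S\|_r$, and bootstrap for $\tau_0$) is exactly the standard unfolding of that one-line proof.
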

\begin{proof}
The proof follows by means of a Lady Windermere's fan argument, after plugging in the results obtained in Lemmata \ref{localerror2} and \ref{stability2} and using the regularity estimates for the commutator terms obtained in Lemma \ref{lemma_comm}.
\end{proof}

\section{Asymptotic consistency}
In this section we show that our novel class of first and second order integrators are asymptotically consistent, meaning that in the limit $c\to\infty$ we recover the solution of the limit system. 

The limit system can be for instance derived via Modulated Fourier Expansion techniques, see for example \cite{CHL}, \cite{FaS}, \cite{HL}, \cite{HLW}. We refer to \cite{BaKoS18} for the details of this derivation. 

\subsection{Asymptotic convergence of the first order method}
In this section we motivate why the method given by \eqref{KGo1} and \eqref{Schro1} converges towards the solution of the limit system as $c\to\infty$.

We see that formally it holds
\begin{align}\label{ac_1}
    e^{i\tau c\nab} = e^{i\tau (c^2+\frac12\Delta)} +\mathcal{O}(c^{-2}),
\end{align}
as Taylor series expansion of the function $x\mapsto c\sqrt{x+c^2}$ around the point zero shows
\begin{align}\label{h4req}
    \big\|c\nab f - \big(c^2-\frac12 \Delta\big)f\big\|_r\leq K c^{-2}\|f\|_{r+4},
\end{align}
for some $K>0$ independent of $c$. Note that this particular asymptotic bound requires additional regularity for $u$.

It follows by \eqref{cnab_liniso_bound}, the observation 
\begin{align}\label{ac_2}
    \big\|\tau \varphi_1(\pm i\tau c^2) \big\|_r \leq \frac{2}{c^2},
\end{align}
and by \eqref{anastasiya}, that for $u^n$ given by \eqref{KGo1} it holds
$$
u^{n+1} = e^{-\frac12 i\tau \Delta}u^n + \mathcal{O}(c^{-2}).
$$
As for $\psi^n$ given by \eqref{Schro1}, we see by \eqref{ac_2} that 
$$
\psi^{n+1} = e^{i\tau\Delta}\psi^n + \mathcal{O}(c^{-2}).
$$

\subsection{Asymptotic convergence of the second order method}
Analogously to the previous section, using \eqref{cnab_liniso_bound}, \eqref{ac_1} and \eqref{ac_2} together with
\begin{align}\label{ac_3}
    \big\|\tau\Psi_2(i\tau c^2)\big\|_r\leq \frac{2}{c^2}
\end{align}
and \eqref{anastasiya}, we are able to see that indeed $\tilde{\mathfrak{I}}_u$ given in Corollary \ref{cor:I_u} fulfills
$$
ic\nab^{-1}\tilde{\mathfrak{I}}_u(u^n,\psi^n) = \mathcal{O}(c^{-2}),
$$
Note that, by \eqref{h4req}, we also require here $H^4$ for $u$. Similarly, with \eqref{ac_2} and \eqref{ac_3}, we obtain that $\tilde{\mathfrak{I}}_{\psi}$ given in Corollary \ref{cor:I_psi} fulfills
$$
\tilde{\mathfrak{I}}_{\psi}(u^n,\psi^n) = \mathcal{O}(c^{-2}).
$$
This implies that the second order method given by \eqref{KGo2} and \eqref{Schro2} also converges to the corresponding solution of the limit system with order $c^{-2}$ formally.

\section{Numerical Experiments}
{We dedicate this last section to the numerical verification of our results. We mainly concentrate on the convergence of the first order method in order to illustrate the explicit relation in our error estimates between gain in $c^{-2\alpha}$, $0\leq\alpha\leq1$, for large $c$, and consequent loss in derivative. In particular, we observe uniform accuracy and and improvement in convergence for more regular initial data and large $c$, as depicted in Theorem \ref{thm:1}. Then we briefly present the convergence results for the second order method, which verify second order convergence and uniform accuracy, as obtained in Theorem \ref{thm:2}. We leave out the experiments for different regularity assumptions in this case for the sake of brevity. For the spatial discretisation we use a standard Fourier pseudospectral method, choosing $M=200$ as the highest Fourier mode. See \cite{STW} for more information on this technique as well as for some applications.}

In Figure 1 we plot the global error of the first order scheme given by \eqref{KGo1} and \eqref{Schro1} measured in $H^1$ for different values of $c$ and initial data in Sobolev spaces varying in regularity, as well as the convergence of the second order scheme given by \eqref{KGo2} and \eqref{Schro2} measured in $H^1$ for different values of $c$ and smooth initial data.  


$$
$$
 \begin{figure}[h!]
  \includegraphics[width=0.95\textwidth]{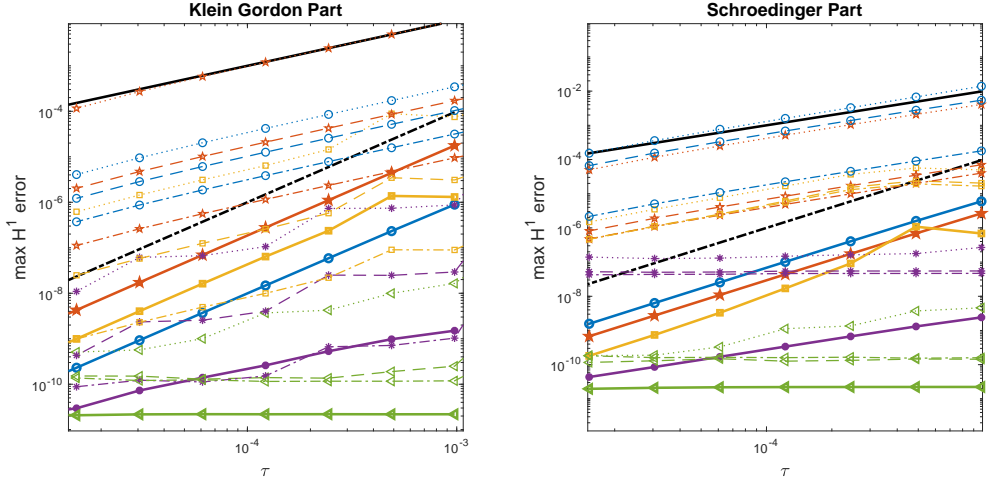}
  \caption{Convergence plot of the first and second order scheme given by \eqref{KGo1}, \eqref{Schro1} and \eqref{KGo2}, \eqref{Schro2}. The blue, orange, yellow, purple and green lines correspond to the values of $c=1$, $c=10$, $c=100$, $c=1000$ and $c=10000$ respectively. The black thick lines are reference lines of slope one and two. The ticker solid lines correspond to the second order scheme with smooth initial data. The thinner dotted, dashed and mixed lines correspond to the first order scheme with $H^2$, $H^3$ and $H^4$ initial data respectively.}
  \label{order_plot}
\end{figure}



\newpage

\subsection*{Acknowledgements}

{\small
The author has received funding from the European Research Council (ERC) under the European Union’s Horizon 2020 research and innovation programme (grant agreement No. 850941).
}

\end{document}